\documentclass[10pt,reqno]{amsart}
\usepackage[utf8]{inputenc}
\usepackage[english]{babel}
\usepackage{amsmath, amssymb, amsthm}
\usepackage{mathrsfs}
\usepackage[shortlabels]{enumitem}
\usepackage{graphicx}
\usepackage{tikz-cd}
\usepackage{xcolor}
\usepackage{mathtools}
\numberwithin{equation}{section} 

\usepackage{diagbox} 
\usepackage{tensor}  

\mathtoolsset{showonlyrefs=true}

\usepackage{hyperref} 

\usepackage[alphabetic]{amsrefs} 

\theoremstyle{plain}
\newtheorem{thm}{Theorem}[section]

\newtheorem{lemma}[thm]{Lemma}

\newtheorem{quest}{Question}

\theoremstyle{definition}

\newtheorem{rmk}[thm]{Remark}

\allowdisplaybreaks

\newcommand{\R}{\mathbb{R}}

\newcommand{\eps}{\varepsilon}

\DeclareMathOperator{\real}{Re}

\DeclareMathOperator{\vol}{Vol}

\newcommand{\A}{\mathbb{A}}

\DeclareMathOperator{\tr}{tr}

\def\XXint#1#2#3{{\setbox0=\hbox{$#1{#2#3}{\int}$ }
\vcenter{\hbox{$#2#3$ }}\kern-.6\wd0}}

\usepackage{scalerel,stackengine}
\stackMath
\newcommand\hhat[1]{%
\savestack{\tmpbox}{\stretchto{%
  \scaleto{%
    \scalerel*[\widthof{\ensuremath{#1}}]{\kern.1pt\mathchar"0362\kern.1pt}%
    {\rule{0ex}{\textheight}}
  }{\textheight}%
}{2.4ex}}%
\stackon[-6.9pt]{#1}{\tmpbox}%
}

\DeclareMathOperator{\diff}{Diff}

\newcommand{\defeq}{\coloneqq}
\DeclareMathOperator{\id}{Id}

\renewcommand{\l}{\left}
\renewcommand{\r}{\right}
\newcommand{\mc}{\mathcal}

\DeclareMathOperator{\Diff}{Diff}

\newcommand{\ip}[2]{\left\langle #1,#2 \right\rangle}

\DeclareMathOperator{\ran}{ran}

\subjclass[2020]{37D20, 37D40}

\keywords{Magnetic geodesics, Magnetic action, Rigidity, X-ray transform}

\title[Marked Magnetic Action Rigidity]{Marked Magnetic Action Rigidity}
\author{Louis-Brahim Beaufort}
\address{Universit\'e Paris-Saclay, Laboratoire de math\'ematiques d’Orsay, 91405, Orsay, France.}
\email{louis-brahim.beaufort@math.cnrs.fr}

\author{Sebastián Muñoz-Thon}
\address{Universit\'e Paris-Saclay, Laboratoire de math\'ematiques d’Orsay, 91405, Orsay, France.}
\email{sebastian.munoz-thon@universite-paris-saclay.fr}

\author{Sean Richardson}
\address{Department of Mathematics, University of Washington, Seattle, WA 98195.}
\email{seanhr@uw.edu}

\begin{document}

\begin{abstract}
An exact magnetic system over a closed manifold $M$ consists of a pair $(g,\alpha)$, where $g$ is a Riemannian metric and $\alpha$ is a 1-form encoding a magnetic field. In this context, we consider a generalization of the marked length rigidity conjecture: does the marked magnetic action spectrum of magnetic systems with Anosov magnetic flow determine the metric and the 1-form, up to a natural obstruction? In this article we answer this question in two settings: 1) locally for systems with close metrics and 1-forms and 2) for metrics in the same conformal class.
\end{abstract}

\maketitle


\section{Introduction}

Given a Riemannian manifold $M$ with Anosov geodesic flow, there exists a unique length-minimizing geodesic in each free homotopy class. The \emph{marked length spectrum} is the map that assigns to each free homotopy class the length of the minimizing geodesic in it. The Burns--Katok conjecture \cite{1985-burns-katok} asks if this map determines the metric $g$ up to a natural obstruction (a diffeomorphism of $M$ homotopic to the identity). 

In this article, we give partial answers to the magnetic version of this conjecture. An \emph{exact magnetic system} on $M$ consists of a pairing $(g,\alpha)$ of a Riemannian metric $g$ with a 1-form $\alpha$. We define the \emph{Lorentz force} as the map $Y \colon TM \to TM$ by
\[ 
d\alpha_{x}(v,w)=g_{x}(w,Y_{x}(v)), 
\]
where $v,w \in T_{x}M$. A \emph{magnetic geodesic} is a curve $\gamma \colon [0,T] \to M$ with acceleration governed by the Lorentz force
\begin{equation} \label{eq:mag-geo}
    \nabla_{\dot{\gamma}} \dot{\gamma}(t) = Y_{\gamma(t)}(\dot{\gamma}(t))
\end{equation}
where $\nabla$ is the covariant derivative induced by the Levi-Civita connection of $g$. Equation \eqref{eq:mag-geo} defines a flow $\varphi_{t} \colon SM \to SM$ over the unit tangent bundle $SM = \{(x,v) \in TM : |v|_g = 1\}$ called the \emph{magnetic flow}. We will work with magnetic flows that are Anosov (see Section \ref{section:prelim}). Given a smooth curve $\gamma \colon [0,T] \to M$, the \emph{(time-free) action} with respect to the magnetic system $(g,\alpha)$ is given by
\[ 
\A(\gamma) = \A_{g, \alpha}(\gamma) = \frac{1}{2}\int|\dot{\gamma}|_{g}^{2}+\frac{T}{2}-\int_{\gamma}\alpha.
\]
This is a natural action to consider as it is the action used in Ma\~n\'e's action potential at critical energy level $1/2$, see \cite{M97, CIPP00, BK02}.
Importantly, in every non-trivial free homotopy class $c$, there is a unique minimizer of $\A$, which will be a unit-speed magnetic geodesic $\gamma_{g,\alpha}(c)$ \cite{CIPP00}*{Theorem 29}. It is a quick exercise to verify that in the case of an everywhere vanishing magnetic field, these minimizing values are precisely the lengths of the closed geodesics. Thus, in similar fashion to how \cite{DPSU07} considers a generalization of the boundary rigidity problem using this action, we consider the following natural generalization of the marked length spectrum to the magnetic setting. Letting $\mathcal{C}$ denote the set of free homotopy classes on $M$, we define the \emph{marked magnetic action} as the map
\begin{align*}
    \mathcal{C} \to \R,
    \quad\quad c \mapsto \A_{g,\alpha}(\gamma_{g,\alpha}(c)).
\end{align*}
It is not hard to see that $\A$ is invariant under the action by pullback of a diffeomorphism homotopic to the identity, and the addition of an exact 1-form to $\alpha$, thus $\A_{g,\alpha}(c)=\A_{\phi^{*}g,\phi^{*}\alpha+d\varphi}(c)$. Hence, as a natural generalization of the marked length spectrum rigidity conjecture, we consider the following:

\begin{quest} \label{quest:mar}
    Does $(\A(\gamma(c)))_{c \in \mathcal{C}}$ determine $(g,\alpha)$ up to a diffeomorphism homotopic to the identity and an exact closed form?
\end{quest}

In this article, we answer Question \ref{quest:mar} in two cases. First, we consider magnetic systems in a small neighborhood of a fixed system with solenoidal-injective magnetic X-ray transform $I_2$ (see Section \ref{sec:magnetic_flows}). Using a stability estimate obtained in \cite{2025-munoz-thon_Richardson}, we are able to provide the following local result analogous to the main result of \cite{2019-guillarmou-lefeuvre} for the geodesic case.

\begin{thm} \label{thm:local_full}
Let $(M,g_{0},\alpha_{0})$ be a $C^{k}$-Anosov magnetic system, whose magnetic X-ray transform $I_{2}$ is solenoidal-injective. There exist $N,\eps>0$ such that the following holds. Let $(g,\alpha)$ be a magnetic system with the same marked magnetic action as $(g_{0},\alpha_0)$, and such that 
   \[ 
   \|g-g_{0}\|_{C^{N}(M)}+\|\alpha-\alpha_{0}\|_{C^{N}(M)} \leq \eps.
   \]
   Then there exists a diffeomorphism $\phi \colon M \to M$ isotopic to the identity, and an exact form $d\varphi$ such that 
   \[ (g,\alpha)=(\phi^{*}g_{0},\phi^{*}\alpha_{0}+d\varphi). \]
\end{thm}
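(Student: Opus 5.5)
We follow the strategy of Guillarmou--Lefeuvre \cite{2019-guillarmou-lefeuvre} for the marked length spectrum, adapted to the magnetic setting. The argument has three parts: a gauge fixing (slice) step, a linearization step producing an approximate X-ray identity, and a contradiction argument using the stability estimate of \cite{2025-munoz-thon_Richardson}.

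\textbf{Gauge fixing.} First we remove the natural obstruction. The group of diffeomorphisms isotopic to the identity near $\id$, together with the addition of exact forms, acts on pairs $(g,\alpha)$. By a slice theorem (an implicit function argument in $C^{N}$ or H\"older spaces), for $(g,\alpha)$ close enough to $(g_0,\alpha_0)$ there exist $\phi$ close to $\id$ and $\varphi$ small such that
\[
(g',\alpha') \defeq (\phi^{*}g,\ \phi^{*}\alpha+d\varphi)
\]
lies in a slice transverse to the orbit at $(g_0,\alpha_0)$. Concretely, $h \defeq g'-g_0$ and $\beta \defeq \alpha'-\alpha_0$ are solenoidal in the sense used for $I_2$ in Section~\ref{sec:magnetic_flows}: $(h,\beta)$ is orthogonal to the tangent space of the orbit, $\{(\mathcal{L}_X g_0,\ \mathcal{L}_X\alpha_0+df)\}$. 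Since the marked action is invariant under this action, $(g',\alpha')$ still has the marked action of $(g_0,\alpha_0)$. It therefore suffices to show $(h,\beta)=0$.

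\textbf{Linearization.} Next we linearize the action along closed orbits. For each class $c$, let $\gamma_0$ be the $(g_0,\alpha_0)$-minimizer. Because $\gamma_0$ is a critical point of $\A_{g_0,\alpha_0}$, we have
\[
\A_{g',\alpha'}(c)\le \A_{g',\alpha'}(\gamma_0)=\A_{g_0,\alpha_0}(c)+\tfrac12\int_{\gamma_0}h(\dot\gamma_0,\dot\gamma_0)-\int_{\gamma_0}\beta .
\]
Equality of marked actions then gives $I_2(h,\beta)(\gamma_0)\ge 0$ for every closed orbit, where $I_2(h,\beta)$ is the magnetic X-ray transform of $\tfrac12 h-\beta$ restricted to $SM$. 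Exchanging the roles of the two systems, using the structural stability of the Anosov flow to identify the closed orbits, and performing a second order Taylor expansion gives a reverse inequality up to an $O(\|(h,\beta)\|^{2})$ error. Following Guillarmou--Lefeuvre, we then combine this with a positive Liv\v{s}ic theorem and the approximate Liv\v{s}ic theorem, or equivalently with the variance bound $\|\Pi f\| \lesssim \ldots$. The outcome is that the function $f=\pi_2^*h/2-\pi_1^*\beta$ on $SM$ satisfies
\[
\|\Pi f\|_{H^{-s}}\lesssim \|(h,\beta)\|^{1+\delta}_{C^{N}}
\]
for some $\delta>0$, where $\Pi$ is the generalized X-ray operator.

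\textbf{Conclusion.} The stability estimate of \cite{2025-munoz-thon_Richardson}, valid because $I_2$ is solenoidal-injective, gives for solenoidal $(h,\beta)$
\[
\|(h,\beta)\|_{H^{-s'}}\lesssim \|\Pi f\|_{H^{s}} .
\]
Interpolating between this low-regularity control and the a priori $C^{N}$ smallness, we obtain
\[
\|(h,\beta)\|\le C\|(h,\beta)\|^{1+\delta'} .
\]
For $\eps$ small this forces $(h,\beta)=0$.

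\textbf{Main obstacle.} The hardest step is the second: controlling the nonlinear error uniformly over all periodic orbits. This requires that the minimizers for $(g',\alpha')$ be the structurally stable continuations of those for $(g_0,\alpha_0)$, and it requires the positive and approximate Liv\v{s}ic theorems to convert one-sided, orbitwise inequalities into the $H^{-s}$ bound on $\Pi f$ with a superlinear remainder. I would first derive the estimate $\A_{g',\alpha'}(c)=\A(\gamma_0)+O(\|(h,\beta)\|^{2}T)$ by reparametrizing the conjugate orbit. I would then apply the Guillarmou--Lefeuvre lemma, which bounds $\|f\|_{H^{-s}}$ for functions with small periodic integrals.
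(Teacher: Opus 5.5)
Your outline follows the same Guillarmou--Lefeuvre scheme as the paper: slice lemma; $I_2f\ge 0$ by evaluating $\A_{g',\alpha'}$ on the $(g_0,\alpha_0)$-minimizer (this uses exact linearity of the action in $(g,\alpha)$ plus minimality in $c$, not criticality of $\gamma_0$); a Liv\v{s}ic step; the normal operator; interpolation. However, the closing step as written fails.

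Write $f=[p,q]=[\tfrac12h,-\beta]$ and $\pi^*f=\pi_2^*p+\pi_1^*q$. The operator $\pi_*\Pi\pi^*$ is not injective on solenoidal pairs. Indeed $\Pi1=0$, since $R_\pm(z)1=-1/z$ is purely polar. Meanwhile $f_c=[\tfrac{c}{2}g_0,0]$ satisfies $D_\mu^*f_c=0$ and $\pi^*f_c\equiv c/2$. Hence no inequality $\|(h,\beta)\|_{H^{-s'}}\lesssim\|\Pi\pi^*f\|$ can hold. A bound on $\|\Pi\pi^*f\|_{H^{-s}}$ is blind to the component of $f$ along $[g_0,0]$, so it cannot exclude $g'\approx(1+c)g_0$.

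This is why the paper works with $N_2=\pi_*(\Pi+1\otimes1)\pi^*$ and uses $\|f\|_{H^{-1-s}}\lesssim\|N_2f\|_{H^{-s}}$ on $\ker D_\mu^*$. One must therefore also show that the mean $\langle\pi^*f,1\rangle_{L^2(SM)}$ is superlinearly small, and your middle step never produces this. (You also derive a bound in $H^{-s}$ but use $\|\Pi f\|_{H^{s}}$ in the last inequality; $\Pi$ only maps $H^{s}\to H^{-s}$.) The gap is repairable with your own two-sided bound $0\le I_2f(c)\le CT_c\|(h,\beta)\|^2$. Equidistribution of normalized closed orbits toward the Liouville measure, as in the proof of Theorem~\ref{thm:conformal}, gives $0\le\langle\pi^*f,1\rangle\lesssim\|(h,\beta)\|^2$.

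Your middle step is also only a placeholder, whereas in the paper it is concrete and is exactly where the mean enters. The positive Liv\v{s}ic theorem gives $A=\pi^*f+Fh\ge0$ with $\|A\|_{C^{\beta r}}\lesssim\|f\|_{C^\beta}$. Since $\Pi F=0$ and $\langle Fh,1\rangle=0$, one gets $\|N_2f\|_{H^{-s}}\lesssim\|\Pi A\|_{H^{-s}}+|\langle A,1\rangle|\lesssim\|A\|_{H^s}\lesssim\|A\|_{L^2}^{1-\nu}\|A\|_{H^{r'}}^{\nu}$. Positivity then gives $\|A\|_{L^2}^2\le\|A\|_{L^1}\|A\|_{L^\infty}$ with $\|A\|_{L^1}=\langle A,1\rangle=\langle\pi^*f,1\rangle$. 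So positivity turns the single number $\langle\pi^*f,1\rangle$ into $L^2$ control of $A$, and everything reduces to a superlinear bound on that mean. The paper gets this bound from the X-ray stability estimate (Lemma~\ref{lemma:stab}) together with the Taylor bound (Lemma~\ref{lemma:taylor}).

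Alternatively, an approximate Liv\v{s}ic theorem applied to your two-sided bound gives $\pi^*f=Fu+r$ with $r$ superlinearly small in a H\"older norm. This controls $\Pi\pi^*f=\Pi r$ and the mean simultaneously, and makes the positive Liv\v{s}ic theorem unnecessary.

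In either version, keep your normalization. The remainder really is $O(T_c\|(h,\beta)\|^2)$ and is not uniform in $c$: already for $g=\lambda g_0$, $\alpha=\alpha_0=0$ it equals $-\tfrac12(\sqrt\lambda-1)^2\ell_{g_0}(c)$. So the stability or approximate Liv\v{s}ic estimate must be applied to $T_c^{-1}I_2f(c)$.
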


A key difficulty in generalizing from the geodesic flow to the magnetic case is that the linearization argument requires a coupling $[g, \alpha]$ of a symmetric 2-tensor with a 1-tensor. Such a coupling is also required for the magnetic versions of the potential, divergence, and normal operators, so we have a coupling of Fourier modes throughout our argument.

We now discuss sufficient conditions for solenoidal-injectivity of the magnetic X-ray transform acting on symmetric tensors of order $\leq 2$. It was shown in \cite{Ainsworth15} that it is injective on surfaces. In higher dimensions, two similar criteria exist in the literature. The first one is obtained in \cite{DPSU07} on manifolds with boundary, and was translated recently to closed manifolds in \cite{2025-munoz-thon_Richardson}. Denote, for $(x, v) \in SM$,
\begin{align*}
k_{x}(v) = \sup_{z \in S_x M, z \perp v} 2 \mathcal{R}(z, v, v, z) + g(Y_x(v), z )^2 + (n+3) | Y_x z |_{g}^2 - 2 g((\nabla_x Y_p) (v), z ),
\end{align*}
where $\mathcal{R}$ the Riemann curvature tensor. Then, if for any closed orbit $\gamma$ with period $T_\gamma$, we have 
\begin{align} \label{eq:crit_dp}
T_\gamma \int_0^{T_\gamma} \max(0, k(\gamma(t), \dot{\gamma}(t)) dt \leq 4,
\end{align}
it follows that the magnetic X-ray transform is solenoidal injective on tensors of order $\leq 2$.

The second criterion was obtained in \cite{Beaufort-magnetic-tom} and may be expressed in terms of the magnetic sectional curvature $\mathrm{sec}^{g, \alpha}$ recently introduced in \cite{Assenza24}, which is defined as follows. For $v, z \in S_x M$, $x \in M$,
\begin{align} \label{eq:def_mag_sec}
\mathrm{sec}^{g, \alpha}_v(w) = \mathcal{R}(w, v, v, w) + g( (\nabla_w Y_x) (v), w ) + \frac{1}{4} \vert Y_x (w) \vert_g^2 + \frac{3}{4} g( w, Y_x (v) )^2.
\end{align}
Then, if we have 
\begin{align} \label{eq:crit_b}
    \mathrm{sec}^{g, \alpha}_v(w) + \left ( \frac{n}{2} - 1 + \frac{2}{n+2} \right )g( w, Y_x (v) )^2 < 0, \quad \forall x \in M, \, \forall v, w \in S_x M,
\end{align}
it follows that the magnetic X-ray transform is solenoidal injective on tensors of order $\leq 2$. 

We note both conditions are satisfied for negatively curved $(M, g)$ with $\alpha$ sufficiently small in $C^2$-norm (for \eqref{eq:crit_b} $C^{1}$-small norm is enough). Then, Theorem \ref{thm:local_full} implies:

\begin{thm} \label{thm:local}
   Let $(M,g_{0},\alpha_{0})$ be an Anosov magnetic system. Assume either
   \begin{enumerate}
       \item $M$ is a surface.
       \item $\dim M \geq 3$ and either \eqref{eq:crit_dp} or \eqref{eq:crit_b} holds.
   \end{enumerate}
   There exists $N,\eps>0$ such that the following holds. Let $(g,\alpha)$ be a magnetic system with the same marked magnetic action as $(g_{0},\alpha)$, and such that 
   \[ 
   \|g-g_{0}\|_{C^{N}(M)}+\|\alpha-\alpha_{0}\|_{C^{N}(M)} \leq \eps.
   \]
   Then there exists a diffeomorphism $\phi \colon M \to M$ isotopic to the identity, and a exact form $d\varphi$ such that 
   \[ (g,\alpha)=(\phi^{*}g_{0},\phi^{*}\alpha_{0}+d\varphi). \]
\end{thm}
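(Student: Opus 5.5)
Theorem \ref{thm:local} follows from Theorem \ref{thm:local_full}: it suffices to check that, under each of the listed hypotheses, the magnetic X-ray transform $I_2$ of $(M,g_0,\alpha_0)$ is solenoidal-injective on pairs $[h,\beta]$ of symmetric $2$-tensors and $1$-forms. Once this is known, Theorem \ref{thm:local_full} applies verbatim and yields $N,\eps$, the diffeomorphism $\phi$ isotopic to the identity and the exact form $d\varphi$. (We read the hypothesis ``same marked magnetic action as $(g_0,\alpha)$'' as $(g_0,\alpha_0)$, as in Theorem \ref{thm:local_full}.) The $C^k$-Anosov regularity required there is automatic, since $(g_0,\alpha_0)$ is smooth and Anosov.

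I would split into the two cases of the statement. In case (1), $M$ is a surface and we invoke \cite{Ainsworth15}. There, injectivity on surfaces is established for the magnetic ray transform acting on tensors of order $\le 2$ modulo the natural potential (gauge) part. The step that needs care is to verify that Ainsworth's notion of ``solenoidal'' for the coupled pair $[h,\beta]$ agrees with the one used in Section \ref{sec:magnetic_flows}, namely the kernel of the magnetic divergence adjoint to the magnetic potential operator $[p,f]\mapsto X(p+f)$. I would also confirm that the closed (Anosov) setting is covered, possibly via the translation in \cite{2025-munoz-thon_Richardson}.

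In case (2), $\dim M\ge 3$ and we use one of the two criteria.
\begin{itemize}
\item Under \eqref{eq:crit_dp}, the Pestov-type energy identity of \cite{DPSU07}, transplanted to closed manifolds in \cite{2025-munoz-thon_Richardson}, gives solenoidal injectivity of $I_2$ directly. This is a direct citation.
\item Under \eqref{eq:crit_b}, the result of \cite{Beaufort-magnetic-tom} gives the same conclusion, expressed via the magnetic sectional curvature \eqref{eq:def_mag_sec}.
\end{itemize}
In both sub-cases the only work is to match the formulations of the cited results to the statement needed in Theorem \ref{thm:local_full}.

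The main obstacle is therefore not analytic but this bookkeeping of conventions. The normalisation of $Y$, the sign of $\alpha$ in the action, and the precise coupled operator $I_2$ must agree across \cite{Ainsworth15}, \cite{DPSU07}, \cite{2025-munoz-thon_Richardson} and \cite{Beaufort-magnetic-tom}. In particular, one has to check that the cited results give injectivity modulo exactly the gauge $\{X(p+f)\}$ arising from linearising the invariance $(\phi^*g,\phi^*\alpha+d\varphi)$. Here $p$ is a $1$-form, coming from the vector field generating $\phi$, and $f$ is a function, coming from $\varphi$. If a criterion were stated only for the geodesic-type splitting into separate tensor orders, I would recombine it using the fact that $X$ maps degree-$m$ Fourier modes into degrees $m\pm1$ plus a magnetic zeroth-order term. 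This decomposition lets solenoidal-injectivity for the coupled pair be read off from the cited estimates.
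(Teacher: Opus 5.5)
Your proposal is correct and matches the paper. The paper obtains Theorem \ref{thm:local} directly from Theorem \ref{thm:local_full} by quoting s-injectivity of $I_2$ from \cite{Ainsworth15} on surfaces, from \cite{DPSU07} (as transplanted to closed manifolds in \cite{2025-munoz-thon_Richardson}) under \eqref{eq:crit_dp}, and from \cite{Beaufort-magnetic-tom} under \eqref{eq:crit_b}. Your reading of $(g_0,\alpha)$ as a typo for $(g_0,\alpha_0)$ is right, and since these results are quoted for the coupled pair modulo $\ran D_\mu$, the fallback recombination of Fourier modes you sketch is not needed.
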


Just as in \cite{2019-guillarmou-lefeuvre}, we also obtain a stability result for the magnetic action.

\begin{thm} \label{thm:stability}
Let $(M,g_{0},\alpha_{0})$ be as in Theorem \ref{thm:local_full}. For $s>0$ small enough, there exists constants $C,N,\tau>0$ such that the following holds: there exists $\eps>0$ small enough such that for any magnetic system $(g,\alpha)$ $\eps$-close in the $C^{N}$-topology to $(g_{0},\alpha_{0})$, there exists a diffeomorphism $\phi$ close to $\mathrm{Id}$ such that 
\[
\|[\phi^{*}g,\phi^{*}\alpha+d\varphi]-[g_{0},\alpha_{0}] \|_{H^{s}}\leq C\|\A_{g,\alpha}-\A_{g_{0},\alpha_{0}}\|_{\ell^{\infty}(\mathcal{C})}^{\tau}\|[g,\alpha]-[g_{0},\alpha_{0}] \|_{C^{N}}^{1-\tau}.
\]
\end{thm}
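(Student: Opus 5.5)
The plan follows the Guillarmou--Lefeuvre strategy, adapted to the coupled pair $[g,\alpha]$.

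\emph{Step 1: gauge fixing.} Using a slice theorem, for $(g,\alpha)$ that is $\eps$-close to $(g_0,\alpha_0)$ in $C^N$, we find a diffeomorphism $\phi$ close to $\mathrm{Id}$ and a function $\varphi$ such that the difference
\[
h \defeq [\phi^*g-g_0,\ \phi^*\alpha+d\varphi-\alpha_0]
\]
is magnetic-solenoidal, i.e.\ it lies in the kernel of the magnetic divergence $D^*$ adjoint to the magnetic potential operator $D$. One step is pulling back the metric; the other is adding the exact form. We obtain $\phi,\varphi$ by applying the implicit function theorem to $(\phi,\varphi)\mapsto D^*([\phi^*g,\phi^*\alpha+d\varphi]-[g_0,\alpha_0])$. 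Its linearization at the identity is essentially $D^*D$, which is elliptic and invertible modulo Killing-type kernels; under Anosov hypotheses those kernels are trivial or can be handled. Since $\A$ is gauge invariant, we may replace $(g,\alpha)$ by the gauged pair. We also get $\|h\|_{C^{N'}}\lesssim \|[g,\alpha]-[g_0,\alpha_0]\|_{C^N}$.

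\emph{Step 2: action difference and X-ray transform.} For each $c\in\mathcal C$, let $\gamma_0=\gamma_{g_0,\alpha_0}(c)$ with period $T_c$. Since $\gamma_0$ is unit-speed for $g_0$, we have
\[
\A_{g,\alpha}(\gamma_0)-\A_{g_0,\alpha_0}(\gamma_0)=\tfrac12\int_0^{T_c} h_2(\dot\gamma_0,\dot\gamma_0)\,dt-\int_{\gamma_0}h_1 = T_c\, I_2 h(c),
\]
where $I_2$ is the (normalized) magnetic X-ray transform of $h=(h_2,h_1)$, read as the function $\pi_2^*h_2/2-\pi_1^*h_1$ on $SM$. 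The minimizing property gives $\A_{g,\alpha}(c)\le \A_{g,\alpha}(\gamma_0)$. Taylor expanding to second order, using that $\gamma_{g,\alpha}(c)$ is a critical point and that the Hessian is uniformly controlled by Anosov/structural stability, yields the reverse inequality up to $O(T_c\|h\|_{C^2}^2)$. Hence
\[
|I_2 h(c)| \le T_c^{-1}|\A_{g,\alpha}(c)-\A_{g_0,\alpha_0}(c)| + C\|h\|_{C^2}^2 .
\]

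\emph{Step 3: positive Livšic plus stability estimate.} Let $f$ be the function on $SM$ associated to $h$. Its integrals over closed orbits are bounded by $\delta+C\|h\|^2$, where $\delta\defeq\|\A_{g,\alpha}-\A_{g_0,\alpha_0}\|_{\ell^\infty}$. An approximate Livšic theorem (Gouëzel--Lefeuvre) then gives $f=Xu+r$ with $\|r\|_{C^\beta}\lesssim (\delta+\|h\|^2_{C^2})^{\tau'}\|h\|^{1-\tau'}_{C^{N'}}$. Applying the generalized X-ray operator $\Pi$, and using $\Pi X=0$, we get $\Pi f=\Pi r$. The stability estimate of \cite{2025-munoz-thon_Richardson} for solenoidal $h$, namely $\|h\|_{H^{s-1}}\lesssim\|\Pi_2 h\|_{H^{s}}$, which follows from solenoidal injectivity, then gives
\[
\|h\|_{H^{s}}\lesssim (\delta+\|h\|_{C^2}^2)^{\tau'}\|h\|_{C^{N'}}^{1-\tau'}.
\]
Interpolating, we bound $\|h\|_{C^2}$ by $\|h\|_{H^s}^{\theta}\|h\|_{C^{N'}}^{1-\theta}$. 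Because $\|h\|_{C^{N'}}\le\eps$ is small, the quadratic term is absorbed on the left. This yields the stated estimate with some $\tau>0$ after renaming exponents.

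\emph{Main obstacle.} I expect Step 3 to be the hardest. It requires careful bookkeeping of the regularity and the mapping properties of $\Pi$ on the coupled Fourier modes of orders $0,1,2$. It also requires making the absorption of the $\|h\|^2$ term rigorous, uniformly in $c$, using $T_c\ge T_{\min}>0$.
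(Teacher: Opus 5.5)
Your skeleton matches the paper's: the slice lemma, then the exact identity $\A_{g,\alpha}(\gamma_0)-\A_{g_0,\alpha_0}(\gamma_0)=\int_{\gamma_0}(\tfrac12\pi_2^*h_2-\pi_1^*h_1)$ together with minimization and a second-order comparison (this is Lemma \ref{lemma:taylor}), then the stability estimate. You unpack that estimate into approximate Liv\v sic, $\Pi F=0$ and ellipticity of the normal operator, which is what Lemma \ref{lemma:stab} encapsulates.

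\textbf{The gap is in the final absorption.} You arrive at $\|h\|_{H^s}\lesssim(\delta+\|h\|_{C^2}^2)^{\tau'}\|h\|_{C^{N'}}^{1-\tau'}$, whose quadratic part is $\|h\|_{C^2}^{2\tau'}\|h\|_{C^{N'}}^{1-\tau'}$.
\begin{itemize}
\item Interpolating only the $C^2$ factor gives $\|h\|_{H^s}^{2\tau'\theta}$ times a power of $\|h\|_{C^{N'}}$. Since the approximate Liv\v sic exponent is small, $2\tau'\theta<1$.
\item An inequality $X\le A+BX^{a}$ with $a<1$ cannot be absorbed. It only gives $X\lesssim A+B^{1/(1-a)}$, and here $B$ is a positive power of $\|h\|_{C^{N'}}$, which does not vanish when $\delta=0$.
\item Concretely, for $\delta=0$ your inequality yields only $\|h\|_{H^s}\lesssim\|h\|_{C^{N'}}^{1+\kappa}$ for some $\kappa>0$. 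That does not even recover Theorem \ref{thm:local_full}.
\end{itemize}
Smallness of $\eps$ cannot compensate for a sublinear power of the low norm.

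\textbf{What is missing:} the factor carrying the exponent $1-\tau$ must be a low, interpolable norm. The approximate Liv\v sic theorem needs only a H\"older or $C^1$ bound on the function. Passing from $H^{-1-s}$ to $H^{s_0}$ only requires interpolating with $H^1$. This is why Lemma \ref{lemma:stab} carries $\|f\|_{C^1}^{1-\tau}$. The quadratic term is then $\|f\|_{C^4}^{2\tau}\|f\|_{C^1}^{1-\tau}$, which is homogeneous of degree $1+\tau$ in low norms. Interpolating both factors between $H^s$ and $H^{N_0}$ gives $\|f\|_{H^s}^{\gamma}\|f\|_{H^{N_0}}^{\gamma'}$ with $\gamma=2\tau(1-\theta_4)+(1-\tau)(1-\theta_1)\to 1+\tau$ as $N_0\to\infty$. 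Only once $\gamma>1$ does the smallness of $\|f\|_{C^{N_0}}$ absorb the term. Alternatively, you could also interpolate your $C^{N'}$ factor against a much higher norm that you control.

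\textbf{Two smaller points.}
\begin{itemize}
\item The slice condition must be imposed on the pair $[\tfrac12 h_2,-h_1]$ that actually enters $I_2$, not on $[h_2,h_1]$. $D_\mu^*$ couples the components through $Y$, and only this normalization makes the linearized gauge action equal to $D_\mu^*D_\mu$.
\item Run the normal-operator step at negative regularity, $\|h\|_{H^{-1-s}}\lesssim\|N_2 h\|_{H^{-s}}$, and keep the $1\otimes 1$ term. $\Pi$ is only known to map $H^s\to H^{-s}$, and it annihilates $\pi_2^*g_0=1$, which is the image of the solenoidal pair $[g_0,0]$.
\end{itemize}
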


\begin{rmk}
Although we do not write explicitly the regularity on our previous results, we need the magnetic system to be at least of class $C^{4}$ in order to apply Lemma \ref{lemma:taylor}. We also remark that one can obtain an explicit $N$ by following the proofs of the results.    
\end{rmk}

Our second rigidity result deals with magnetic systems in a given conformal class. In this case, we obtain a full answer. 

\begin{thm} \label{thm:conformal}
    Let $(g_{1},\alpha_{1})$ $(g_{2},\alpha_{2})$ be Anosov magnetic systems with the same marked action spectrum. If $g_{1}$ is conformal to $g_{2}$, i.e., if there exists $f \in C^{\infty}(M)$ so that $g_{2}=e^{2f}g_{1}$, then $f \equiv 0$ and $\alpha_{2}=\alpha_{1}+d\varphi$, for some $\varphi \in C^{\infty}(M)$. 
\end{thm}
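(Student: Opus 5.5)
We plan to adapt the classical Croke--Katok argument for conformal marked length rigidity, using the minimizing property of $\A$ and equidistribution of closed orbits. Write $\A_i=\A_{g_i,\alpha_i}$. Let $\gamma_1(c)$ be the unit-speed $(g_1,\alpha_1)$-magnetic geodesic minimizing $\A_1$ in the class $c$, with period $T$. Since $\gamma_1(c)$ also lies in $c$, the minimizing property of $\A_2$ and the equality of the marked action spectra give
\[
\A_1(\gamma_1(c))=\A_2(\gamma_2(c))\le \A_2(\gamma_1(c))=\frac12\int_0^{T}e^{2f(\gamma_1)}\,dt+\frac T2-\int_{\gamma_1}\alpha_2 .
\]
Here we used $|\dot\gamma_1|_{g_2}^2=e^{2f}$. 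Subtracting $\A_1(\gamma_1(c))=T-\int_{\gamma_1}\alpha_1$, we get
\[
0\le \int_0^T\Big(\tfrac12(e^{2f}-1)-\beta(\dot\gamma_1)\Big)dt,\qquad \beta\defeq \alpha_2-\alpha_1 .
\]
Define the function $F_1(x,v)=\frac12(e^{2f(x)}-1)-\beta_x(v)$ on $S_{g_1}M$. The inequality says that the integral of $F_1$ over every closed orbit of the magnetic flow $\varphi^1_t$ is nonnegative.

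The closed orbits of the Anosov flow $\varphi^1_t$ are equidistributed. More precisely, the Liouville measure is invariant and, by the Anosov closing lemma and specification, it is a weak-$*$ limit of normalized periodic orbit measures. Passing to the limit gives $\int_{S_{g_1}M}F_1\,d\mu_1\ge 0$. The Liouville measure is invariant under $v\mapsto -v$, so the odd term $\beta$ integrates to zero. Hence
\[
\int_M \tfrac12\big(e^{2f}-1\big)\,d\vol_{g_1}\ge 0 .
\]
The symmetric argument, minimizing with $(g_2,\alpha_2)$ and testing against $\A_1$, uses $|\dot\gamma_2|_{g_1}^2=e^{-2f}$ and gives $\int_M \frac12(e^{-2f}-1)\,d\vol_{g_2}\ge 0$. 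Since $d\vol_{g_2}=e^{nf}d\vol_{g_1}$, this reads $\int_M(e^{(n-2)f}-e^{nf})\,d\vol_{g_1}\ge0$.

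To combine these inequalities, the plan is a sharper move: upgrade the first inequality to an equality along each orbit. The strict convexity of the action in the speed variable is what I expect to carry the Croke-type step. Consider the reparametrization $\sigma$ of $\gamma_1(c)$ that is $g_2$-unit speed, so that $\sigma$ has $g_2$-length $L_2=\int_0^T e^{f}dt$. The action is time-free, and the minimum over reparametrizations of $\frac12\int|\dot\sigma|^2+\frac{T'}2$ is the length. Therefore
\[
\A_1(\gamma_1(c))\le \int_0^T e^{f(\gamma_1)}dt-\int_{\gamma_1}\alpha_2 .
\]
This yields $\int_{S_{g_1}M}(e^{f}-1)\,d\mu_1\ge0$, that is, $\int_M e^f\,d\vol_{g_1}\ge\vol_{g_1}(M)$. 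Symmetrically, $\int_M e^{-f}\,d\vol_{g_2}\ge \vol_{g_2}(M)$, that is, $\int_M e^{(n-1)f}\,d\vol_{g_1}\ge\int_M e^{nf}\,d\vol_{g_1}$.

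By Hölder, $\int e^{(n-1)f}\le \big(\int e^{nf}\big)^{(n-1)/n}V^{1/n}$, where $V=\vol_{g_1}(M)$. Combined with the previous inequality, this gives $\int e^{nf}\le V$. Jensen, together with $\int e^f\ge V$, gives $\int e^{nf}\ge V\big(\frac1V\int e^f\big)^n\ge V$. Thus equality holds in Jensen, so $f$ is constant, and the inequalities force $f\equiv0$.

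With $g_1=g_2$, the inequality along orbits becomes $\int_{\gamma}\beta\le 0$ for every closed $\varphi^1$-orbit. Reversing the roles of the two systems gives the opposite inequality along the $\varphi^2$-closed orbits. The main obstacle is extracting $\int_\gamma\beta=0$ for all closed orbits of a single flow. Here the flows differ, and $\beta$ is not odd-symmetrizable orbitwise, because magnetic flows are not reversible. I would first try the following. The minimal action in class $c$ equals $\A_1(\gamma_1)=\A_2(\gamma_2)$. Both are minimizers, so $\A_2(\gamma_1)-\A_2(\gamma_2)=-\int_{\gamma_1}\beta\ge0$ and $\A_1(\gamma_2)-\A_1(\gamma_1)=\int_{\gamma_2}\beta\ge0$. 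Summing over $c$ and $-c$ runs into the same irreversibility issue, so instead I would use the positive Livšic theorem. A function whose integrals over all closed orbits of a transitive Anosov flow are $\le0$ is cohomologous to a nonpositive function, that is, $-\beta=X_1u+h$ with $h\ge0$. Integrating against the Liouville measure, and using that $\beta$ is odd, gives $\int h\,d\mu_1=0$, so $h\equiv0$. Livšic's theorem then gives $\beta=-X_1u$. Finally, for the magnetic X-ray transform in degree one, a 1-form in the range of $X_1$ is exact, by the injectivity of $I_1$ on solenoidal 1-forms known for Anosov magnetic flows (which follows from the degree-$\le 2$ result or from the Pestov identity). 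Hence $\alpha_2=\alpha_1+d\varphi$.
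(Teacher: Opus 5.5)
Your proposal is correct and follows essentially the same route as the paper. Minimality of the action in each free homotopy class, equidistribution of closed orbits toward the Liouville measure (with the odd 1-form terms averaging out) and a H\"older-type inequality force $f\equiv 0$. Then the non-negative Liv\v{s}ic theorem, Liouville invariance and oddness give $\int_\gamma(\alpha_2-\alpha_1)=0$ on all closed orbits, and the Dairbekov--Paternain theorem for 1-forms gives exactness. That theorem holds for every Anosov magnetic flow, so cite it directly rather than the degree-$\le 2$ result, which is only known under extra hypotheses.

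The differences are minor. You compare in both directions and use Jensen instead of assuming $\vol(g_2)\le\vol(g_1)$. You also make explicit the $g_2$-unit-speed reparametrization, whose action is $\ell_{g_2}-\int\alpha_2$, and this is exactly what justifies the length comparison behind the paper's H\"older step. The $g_1$-unit-speed curve itself only gives $E_{g_2}+\tfrac12\ell_{g_1}$, and that yields nothing beyond $\vol(g_1)=\vol(g_2)$ when $n=2$.
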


\subsection{Perspectives and related results}

The Burns--Katok conjecture \cite{1985-burns-katok} has been solved in several cases, although is wide open in higher dimensions. To begin with, Katok \cite{Katok_conformal} solved the conjecture on a fixed conformal class. In the case of negatively curved surfaces, a solution was given by Croke \cite{Croke-rigidity-2d} and Otal \cite{Otal-rigidity-2d}. A deformation result was given by Croke and Sharafutdinov as a consequence of the injectivity of the X-ray transform for manifolds of negative curvature \cite{CS98}. In the case of locally symmetric spaces, the conjecture was solved by Hamenstädt \cite{Hamenstadt-rigidity-LS}. After twenty years, there was a breakthrough by Guillarmou and Lefeuvre \cite{2019-guillarmou-lefeuvre}, showing the results for metric that are close enough. The result on surface was recently generalized to the case of Anosov metrics in \cite{GLP25}, and has also been studied recently for both magnetic and other dissipative flows \cite{EC25}. We point out that the result of \cite{EC25} could hypothetically be generalized along the lines of \cite{GLP25} to obtain a rigidity result on the length of magnetic orbit, but not the magnetic action which we consider here.

    In the case of magnetic system on closed manifolds, a deformation result was obtained on surfaces by Marshall Reber \cite{MR-deformative}. This was recently generalized in \cite{AdSMRT-magnetic-rigidity-2d}, where rigidity was proved using the length of magnetic geodesics in place of the action, we discuss this below. The study of the linearized problem i.e., of the magnetic ray transform, that is, the operator that integrates functions (or pair of tensors) over magnetic geodesic (see Section \ref{sec:magnetic_flows} for the precise definition) has been studied for 0- and 1-tensors in \cite{DP05} and \cite{DP07} on surfaces, and in \cite{DP08} in the higher dimension case. For 2-tensors, in the case of surfaces, it was studied in \cite{Ainsworth15}. The last two authors studied the case of 2-tensors in \cite{2025-munoz-thon_Richardson} under a bound in the sectional curvature and the Lorentz force (which holds, for example, for negatively curved manifolds with small magnetic 1-form in the $C^{2}$-norm). In the forthcoming paper \cite{Beaufort-magnetic-tom}, the first author solves the problem for general $m$-tensors, under a bound in the magnetic sectional curvature (defined in \eqref{eq:def_mag_sec}). 

Regarding our results, Theorems \ref{thm:local_full} and \ref{thm:local} are motivated by the local rigidity result in \cite{2019-guillarmou-lefeuvre}. In the same way as there, we show that when a magnetic system is close to a given one, we can deform it to a one that is divergence free (or solenoidal, see Section \ref{sec:magnetic_flows}). We then show that the potential part of the difference vanishes. This last step uses, in a implicit way, microlocal techniques. Indeed, we use stability results for the linearized problem obtained recently by the last two authors \cite{2025-munoz-thon_Richardson}. Although our bounds follow the spirit from \cite{2019-guillarmou-lefeuvre}, we avoid using the unstable Jacobian, and we neither use the geodesic stretch as in the proof proposed later in \cite{GKL22} (see also \cite{Lefeuvre_book}*{Chapter 17}). Hence, our result also gives a new proof of the local rigidity result by Guillarmou--Lefeuvre by using the stability of the geodesic X-ray transform on closed manifolds. The hypothesis of Theorem \ref{thm:local} regarding the dimension of $M$ or the magnetic sectional curvature, are due to the fact that the stability result from \cite{2025-munoz-thon_Richardson} only holds if the linearized problem is solved (i.e., when the magnetic ray transform is solenoidal injective, see Section \ref{sec:magnetic_flows} for the proper definitions). Our stability result, Theorem \ref{thm:stability} is inspired in the analog result for the marked length spectrum from \cite{2019-guillarmou-lefeuvre}, but it can also be related to the approximate and quantitative marked length spectrum results by Butt \cite{Butt_approx_mls}, \cite{Butt_quantitative_mls}. As in \cite{2019-guillarmou-lefeuvre}, the proof Theorem \ref{thm:stability} follows a deformation argument together with the smoothness of the action (presented in Section \ref{section:lemmata}), the stability of the magnetic X-ray transform (Lemma \ref{lemma:stab}), and Sobolev estimates.

Our rigidity result in a same conformal class is inspired by the corresponding result in the Riemannian case by Katok \cite{Katok_conformal}, and the corresponding result in the case of magnetic systems on manifolds with boundary \cite{DPSU07}*{Theorem 6.1}. The proof of this last result uses Santal\'o's formula which is not suitable for closed manifolds. Instead, we argue as in \cite{Katok_conformal} and use the fact that the magnetic flow preserves the Liouville measure, which implies that this measure can be approximated by Dirac masses over closed trajectories of the flow. This together with a H\"older inequality argument, show that the conformal factor is trivial. The relation of the magnetic 1-forms then follows from algebraic manipulations together with an application of the Non-negative Liv\v sic theorem, and the fact that the magnetic X-ray transform acting on 1-forms is injective \cite{DP08}.

Finally, let us mention why we consider the action instead of the length as in \cite{MR-deformative}, \cite{AdSMRT-magnetic-rigidity-2d}. Although the length of closed orbits is more natural from the dynamical point of view, from the physical and analytic one the action is more appropriate. Indeed, magnetic geodesics are minimizers of the action but not of the length. Furthermore, since we use the microlocal methods from \cite{Guillarmou17} and \cite{2019-guillarmou-lefeuvre} to prove the local result, one has to study first the linearized problem. In the case of the geodesic flow, the linearization of the length gives the geodesic X-ray transform, although in the case of magnetic systems, linearizing the lengths of magnetic geodesics transforms gives birth to an operator that is not of X-ray type. However, the linearization of the action is indeed the magnetic X-ray transform acting on 2- and 1-tensors, a operator that was studied recently by the last two authors \cite{2025-munoz-thon_Richardson}, generalizing \cite{Guillarmou17}. Finally, we also mention that the rigidity results in the case of manifolds with boundary (which also study the magnetic X-ray transform) \cite{DPSU07} also use the action instead of the length, which also served us for a justification to tackle the action rigidity problem instead of the one with the length.

\subsection*{Organization of this article} In Section \ref{section:prelim} we introduce the machinery required for this article. In particular, we recall the definition of anisotropic Sobolev spaces associated to Anosov flows, which serve as the natural ambient where Guillarmou's $\Pi$ operator is defined. There we also introduce the magnetic X-ray transform and its associated normal operator defined in terms of $\Pi$. In Section \ref{section:lemmata} we prove some technical results that will be used in the proof of our local result. Finally, Theorems \ref{thm:local_full}, \ref{thm:stability}, and \ref{thm:conformal} are proved in Sections \ref{red:section:proof_local}, \ref{sec:proof_stability}, and \ref{section:proof_conformal}, respectively.

\subsection*{Acknowledgments} The authors thank Thibault Lefeuvre and Gabriel Paternian for helpful discussions. L.-B.B and S.M-T. were supported by the European Research Council (ERC) under the European Union’s Horizon 2020 research and innovation programme (Grant agreement no. 101162990 -- ADG).

\section{Preliminaries} \label{section:prelim}

\subsection{Anosov flows and Anisotropic Sobolev Spaces}

Let $\mathcal{M}$ a closed manifold, and let $\varphi_{t}$ be an Anosov flow on $\mathcal{M}$ preserving a smooth measure $d\mu$. Let $F$ be the generator of $\varphi$. Then, have a continuous flow-invariant splitting
\[ T\mathcal{M}=\R F \oplus E_{s} \oplus E_{u}, \]
and uniform constants $C,\lambda >0$ such that for all $t \ge 0$
\begin{equation} \label{eq:contr_exp}
    |d\varphi_{t}(v)|\leq Ce^{-\lambda t}|v|, \quad \forall v \in E_{s}; \qquad |d\varphi_{-t}(v)| \leq Ce^{-\lambda t}|v|, \quad \forall v \in E_{u}.
\end{equation}

Since the cotangent bundle is more natural from the point of view of microlocal analysis, we also present the dual splitting
\[ T^{*} \mathcal{M}=\R E_{0} \oplus E_{s}^{*} \oplus E_{u}^{*}, \]
where $E_{0}^{*}(E_{s} \oplus E_{u})=0$, $E_{s}^{*}(E_{s} \oplus \R F)=0$, $E_{u}^{*}(E_{u} \oplus \R F)=0$; we have similar expanding/contracting properties as in \eqref{eq:contr_exp} but with $d\varphi$ replaced by $d\varphi^{-\top}$. 

Of course, our main examples will be magnetic flows, that is, given a magnetic system $(g,\alpha)$ on $M$, we consider $\mathcal{M}=SM$ together with the magnetic flow $\varphi_{t} \colon SM \to SM$.

We will use the microlocal version of anisotropic Sobolev spaces developed in \cite{2011-faure-sjostrand}. Let $H$ be the Hamiltonian vector field on $T^{*}\mathcal{M}$ induced by the Hamiltonian $\sigma_{P}(x,\xi):=\langle\xi, F(x)\rangle$, $P:=\frac{1}{i} F$ its principal symbol, and let $(\Phi_{t})_{t \in \mathbb{R}}$ be the symplectic flow generated by $H$. Fix an \emph{order function} $m \in C^{\infty}(T^{*}\mathcal{M},[-1,1])$, i.e., $m$ is such that is 0-homogeneous in the $\xi$-variable for $|\xi|$ large enough, such that $m \equiv 1$ in a conic neighborhood of $E_{s}^{*}$, and $m \equiv -1$ in a conic neighborhood of $E_{u}^{*}$. Consider the associated \emph{escape function} $G_{m}(x,\xi):=m(x,\xi) \log|\xi|_{g}$. Then:
\begin{enumerate}[label=\roman*)]
    \item $H G_{m}(x, \xi) \leq 0$  for $|\xi|_{g} \geq R$;
    \item $H G_{m}(x, \xi) \leq-C<0$ for all $(x,\xi)$ in a conic neighborhood of $E_{s}^{*}\cup E_{u}^{*}$ and such that $|\xi|_{g} \geq R$, for some constant $C>0$.
\end{enumerate}
We now define $A_{s}:=\mathrm{Op} (e^{s G_{m}})$. Next, we define the \emph{anisotropic Sobolev spaces} by 
\[
\mathcal{H}_{\pm}^s(\mathcal{M}):=A_{s}^{\mp1}(L^2(\mathcal{M})).
\]
Morally, the space $\mathcal{H}_{+}^{s}$ (resp. $\mathcal{H}_{-}^{s}$) is defined microlocally to have Sobolev regularity $H^{s}$ (resp. $H^{-s}$) in a conic neighborhood of $E_{s}^{*}$ and regularity $H^{-s}$ (resp. $H^{s}$) in a conic neighborhood of $E_u^*$. 

The normal operator associated to the linearization of $\A$ is a map between these spaces. To define it, we first recall that the resolvents
\[
R_{\pm}(z) = (\mp F-z)^{-1}:=-\int_{0}^{\infty}e^{\mp tF}e^{-tz}dt.
\]
are well defined bounded operators on $L^{2}(\mathcal{M})$ for $\real (z) \gg 0$. It was shown in \cite{2011-faure-sjostrand}  (see also \cite{Lefeuvre_book}*{Theorem 9.1.6}) that the resolvents
\[
R_{\pm}(z)=(\mp F-z)^{-1} \colon \mathcal{H}_{\pm}^{s}(\mathcal{M}) \to \mathcal{H}_{\pm}^{s}(\mathcal{M})
\]
can be extended as a meromorphic family of bounded operators on $\{\real(z)>-cs\}$, where $c>0$ is a constant depending on the flow. Furthermore, the poles do not depend on the choices made during the construction of $\mathcal{H}_{\pm}^{s}(\mathcal{M})$. When the flow preserves a volume form, we have the Laurent expansions around 0:
\[
R_{\pm}(z)=R_{\pm}^{\mathrm{hol}}(z)-\frac{\Pi_{0}^{\pm}}{z},
\]
where the operators $\Pi^{\pm}_{0}$ are the spectral projectors
\[ 
\Pi_{0}^{\pm}=-\frac{1}{2\pi i}\oint_{\gamma} R^{\pm}(z) dz,
\]
where $\gamma$ is a curve enclosing $z=0$. \emph{Guillarmou's $\Pi$ operator} (sometimes called \emph{Guillarmou's covariance operator}, see \cite{GKL22}*{Equation (2.6)}) is then defined by
\[ 
\Pi=-(R_{+}^{\mathrm{hol}}(0)+R_{-}^{\mathrm{hol}}(0)).
\]
We record some useful properties that will be used throughout the proof of Theorem \ref{thm:local_full}.

\begin{lemma} \label{lemma:Pi_prop}
Guillarmou's $\Pi$ operator satisfies the following properties:
    \begin{enumerate}
        \item $\Pi F=0$.
        \item $\Pi \colon H^{s} \to H^{r}$ is bounded for any $r<0<s$.
    \end{enumerate}    
\end{lemma}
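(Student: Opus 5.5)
Both properties follow from the Laurent expansions of $R_\pm(z)$ at $z=0$ and the mapping properties of the resolvents on the anisotropic spaces $\mathcal{H}^s_\pm$. For (1) we read off the identities satisfied by the holomorphic parts; for (2) we use that $H^s$ embeds in both $\mathcal{H}^s_\pm$ and that both embed in $H^{-s}$.

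\textbf{Property (1).} The resolvent identity $(\mp F - z)R_\pm(z) = \mathrm{Id}$ can be applied on either side. We substitute $R_\pm(z) = R_\pm^{\mathrm{hol}}(z) - \Pi_0^\pm/z$ and compare coefficients of $z^{-1}$ and $z^0$.
\begin{itemize}
\item[(a)] The $z^{-1}$ terms give $F\Pi_0^\pm = \Pi_0^\pm F = 0$.
\item[(b)] The $z^0$ terms give $\mp F R_\pm^{\mathrm{hol}}(0) + \Pi_0^\pm = \mathrm{Id}$, and likewise with $F$ on the right.
\end{itemize}
From (b), $R_+^{\mathrm{hol}}(0)F = -(\mathrm{Id} - \Pi_0^+)$ and $R_-^{\mathrm{hol}}(0)F = \mathrm{Id} - \Pi_0^-$. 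Summing these,
\[
\Pi F = -\bigl(R_+^{\mathrm{hol}}(0) + R_-^{\mathrm{hol}}(0)\bigr)F = \Pi_0^- - \Pi_0^+ .
\]

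It remains to show $\Pi_0^+ = \Pi_0^-$. Since the flow preserves $\mu$ and is Anosov, it is mixing, so $0$ is a simple resonance. Hence both projectors equal $u \mapsto \langle u, 1\rangle_{L^2(\mu)}\,1$ (normalizing $\mu(\mathcal{M}) = 1$). One can see this by noting that $\Pi_0^\pm$ is a rank-one projector onto the resonant states, which are the constants, with coresonant state given by $\mu$. Alternatively, one shows $\Pi_0^+ = (\Pi_0^-)^*$ via the duality $R_+(z)^* = R_-(\bar z)$, which holds because $F$ is skew-adjoint on $L^2(\mu)$.

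I expect this identification of the projectors (simplicity of the resonance at $0$ plus the duality) to be the main point needing care. Everything else is formal.

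\textbf{Property (2).} Fix $r < 0 < s$ and take $s' = \min(s, -r)$. By construction of $A_{s'}$, we have continuous inclusions
\[
H^{s'} \hookrightarrow \mathcal{H}^{s'}_\pm \hookrightarrow H^{-s'}.
\]
This holds because the weight $e^{s'G_m}$ is bounded between $\langle\xi\rangle^{-s'}$ and $\langle\xi\rangle^{s'}$. The holomorphic part $R_\pm^{\mathrm{hol}}(0) = R_\pm(z) + \Pi_0^\pm/z$ is given by a contour integral of a bounded family on $\mathcal{H}^{s'}_\pm$, so it is bounded there. Composing,
\[
H^s \hookrightarrow H^{s'} \to \mathcal{H}^{s'}_\pm \to \mathcal{H}^{s'}_\pm \hookrightarrow H^{-s'} \hookrightarrow H^r ,
\]
which shows each $R_\pm^{\mathrm{hol}}(0)$, and hence $\Pi$, is bounded $H^s \to H^r$.

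The only subtlety is that the meromorphic extension must be valid at $z = 0$ for this choice of $s'$. This holds since the extension is meromorphic on $\{\real(z) > -cs'\}$, a region that contains $0$ for any $s' > 0$.
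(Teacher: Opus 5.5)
The paper states this lemma without proof, quoting it from Guillarmou's work, and your overall route is the standard one. Part (2) is fine: it follows from $H^{s'}\hookrightarrow\mathcal{H}^{s'}_\pm\hookrightarrow H^{-s'}$ together with the Cauchy formula $R^{\mathrm{hol}}_\pm(0)=\frac{1}{2\pi i}\oint R_\pm(z)\,\frac{dz}{z}$ on a small circle. Your comparison of Laurent coefficients also correctly gives $\Pi F=\Pi_0^--\Pi_0^+$.

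The gap is the step $\Pi_0^+=\Pi_0^-$, and the reasons you give do not establish it.
\begin{enumerate}
\item A volume-preserving Anosov flow need not be mixing. The constant-roof suspension of a hyperbolic toral automorphism preserves volume and has the non-constant eigenfunction $e^{2\pi i\tau}$, where $\tau$ is the time coordinate. For exact magnetic flows mixing does hold, but that needs its own argument.
\item Mixing is the wrong property anyway: it governs resonances on $i\R\setminus\{0\}$, not the rank of $\Pi_0^\pm$.
\item Even granting ergodicity, \emph{the resonant states are the constants} does not follow directly. Resonant states are a priori distributions in $\mathcal{H}^s_+$, while ergodicity only speaks about invariant $L^2$ or measurable functions.
\item The duality $R_+(z)^*=R_-(\bar z)$ only yields $(\Pi_0^+)^*=\Pi_0^-$. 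This gives equality only if you already know $\Pi_0^+$ is self-adjoint.
\end{enumerate}

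What is missing is a bridge between the anisotropic projector and $L^2$ spectral theory, and it needs only invariance of $\mu$. Since $e^{\mp tF}$ is unitary on $L^2(\mu)$, the integral $-\int_0^\infty e^{\mp tF}e^{-tz}\,dt$ converges on $L^2$ for all $\real z>0$. By unique continuation from $\real z\gg 0$, it agrees there with the continued $R_\pm(z)$ on $C^\infty$, so the latter has no poles in $\real z>0$.

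Now take $f\in C^\infty(\mathcal{M})$ and real $z\downarrow 0$.
\begin{itemize}
\item The Laurent expansion gives $-zR_\pm(z)f=\Pi_0^\pm f-zR^{\mathrm{hol}}_\pm(z)f\to\Pi_0^\pm f$ in $\mathcal{H}^s_\pm$.
\item On the other hand, $-zR_\pm(z)f=z\int_0^\infty e^{-tz}e^{\mp tF}f\,dt$. Writing $F=-iB$ with $B$ self-adjoint, this equals $z(z\mp iB)^{-1}f$.
\item Since $z/(z\mp i\lambda)\to\mathbf{1}_{\{\lambda=0\}}$ boundedly, the spectral theorem gives convergence in $L^2$ to $Pf$. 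Here $P$ is the orthogonal projection onto $\ker_{L^2}F$, the same for both signs.
\end{itemize}
Both limits hold in $\mathcal{D}'$, so $\Pi_0^+f=Pf=\Pi_0^-f$, and hence $\Pi F=0$.

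Ergodicity (Anosov's theorem, not mixing) then identifies $P$ with $u\mapsto\langle u,1\rangle 1$. The paper needs that identification for the $1\otimes 1$ term in $N_m$, but it is not needed for (1).
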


Since we are going to work with 2- and 1-tensors, we recall how to identify these with functions over the sphere bundle. From now on, we focus in the case of the magnetic system $(g,\alpha)$ on $M$, and its respective magnetic flow $\varphi_{t} \colon SM \to SM$. In this work, $S^{m}T^{*}M$ will denote the vector bundle of symmetric $m$-tensors, and $C^{k}(M,S^{m}T^{*}M)$ ($H^{s}(M,S^{m}T^{*}M)$) will denote the space of $C^{k}$-sections ($H^{s}$-sections respectively) on $M$ with values in
the vector bundle $S^{m}T^{*}M$. We will also denote the space of 1-forms simply by $C^{\infty}(T^{*}M)$ instead of $C^{\infty}(M,T^{*}M)$. Recall that any element $h \in C^{\infty}(M,S^{m} T^{*}M )$ can be naturally identified with a function over $SM$ by the map $\pi_m^* \colon C^{\infty}(M,S^{m}T^{*}M) \to C^{\infty}(SM)$ defined by $(\pi^*h)(x,v) = h_x(v, \ldots, v)$. This identification map has natural adjoint 
\begin{align*}
\pi_{m*} \colon \mathcal{D}'(SM) \to \mathcal{D}'(M,S^{m}T^{*}M), \quad
\ip{\pi_{m*}f}{h}_{S^m(T^*M)} \defeq \ip{f}{\pi_{m}^*h}_{SM}
\end{align*}
given by the natural pairing with a smooth symmetric covariant $m$-tensor $h \in C^{\infty}(S^{m}T^{*}M)$ induced by the metric on $M$. Here $\mathcal{D}'$ is used to denote the space of distributions. 

\subsection{Analysis of magnetic flows} \label{sec:magnetic_flows}

In this subsection we introduce operators defined in terms of the magnetic flow. We first recall the definition of the associated X-ray transform. Let $[p,q] \in C^{0}(M,S^{m} T^{*}M) \times C^{0}(M,S^{m-1} T^{*}M )$. We define the \emph{magnetic X-ray transform} of $[p,q]$ by
\[ 
I_{m}[p,q](c)=\int_{0}^{\ell_{g}(\gamma(c))} (\pi_{m}^{*}p+\pi_{m-1}^{*}q)(\varphi_{t}(x,v))dt,
\]
where $\gamma(c)$ is the unique magnetic geodesic in $c$. When we want to emphasize that the X-ray transform corresponds to the one associated to the magnetic system $(g,\alpha)$, we will write $I_{m}:=I_{m}^{g,\alpha}$. As it is shown in \cite{DPSU07}, $I_{2}$ corresponds to the linearization of $\A$ in the direction $[2p,-q]$. 

We also recall that tensors on a Riemannian manifold $M$ can be written as a sum of potential plus a divergence free part. In the same fashion, we have a similar decomposition for pair of tensors $[p,q] \in C^{1}(M,S^{m}T^{*}M) \times C^{1}(M,S^{m-1}T^{*}M)$ when $M$ is endowed with a magnetic system. We present it for $m=1,2$ which is enough for our purposes (see \cite{2025-munoz-thon_Richardson}*{Section 4} for the general result). We define the \emph{magnetic potential} and the \emph{magnetic divergence} by
\[ 
\begin{split}
D_{\mu} &\colon C^{1}(T^* M) \times C^{1}(M) \to C^{0}(M,S^{2} T^* M ) \times C^{0}(T^* M ), \\
    D_{\mu}^{*} &\colon  C^{1}(M,S^{2} T^* M ) \times C^{1}(T^{*}M) \to C^{0}(T^{*}M) \times C^{0}(M)  ,
\end{split}
\]
\[ D_{\mu}=\begin{pmatrix}
     D & 0 \\ Y & d
    \end{pmatrix},  \qquad D_{\mu}^{*}=\begin{pmatrix}
        -\tr(\nabla \bullet) & -Y \\ 0 & -\tr(\nabla \bullet)
    \end{pmatrix}, \]
respectively. Here $D$ is the symmetrized covariant derivative, $\nabla$ is the Levi-Civita connection, $d$ is the exterior differential, and $Y$ is the Lorentz force acting on 1-tensors which is defined by $Y(\xi)_{x}(v)=\xi(Y_{x}(v))$, where $\xi \in C^{0}(T^{*}M)$, $v \in T_{x}M$.

Then, the potential-solenoidal decomposition says that any pair can be written as a the sum of a pair of tensors in the image of $D_{\mu}$ together with an element in the kernel of $D_{\mu}^{*}$.

Furthermore, the decomposition can be used to characterize the kernel of $I_{m}$. We have that $\ran D_{\mu} \subset I_{m}$, and in the case of equality, we say that $I_{m}$ is \emph{solenoidal injective}, which we always abbreviate as s-injective.

After these preliminaries, we are in good shape to state the stability result for the X-ray transform (\cite{2025-munoz-thon_Richardson}*{Theorem 1.4}). This says that we can control the divergence free part of a pair of tensors by its magnetic X-ray transform.
 
\begin{lemma}[Stability] \label{lemma:stab}
    Assume that $I_{m}$ is s-injective. There exists $s_0 \in(0,1)$ and $C, \tau>0$ such that for all $f \in C^{1}( M,S^{m}T^{*}M) \times C^{1}(M,S^{m-1}T^{*}M)$ and $s <s_{0}$:
    \[ \|\pi_{\ker D_{\mu}^{*}} f \|_{H^{s_0}} \leq C \|I_m f \|_{\ell^{\infty}(\mathcal{C})}^{\tau}\|f\|_{C^1}^{1-\tau}.\]
\end{lemma}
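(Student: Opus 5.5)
The plan follows the Guillarmou--Lefeuvre scheme: approximate Liv\v sic theorem, then Guillarmou's $\Pi$ operator, then ellipticity of the magnetic normal operator on solenoidal pairs, then interpolation. Throughout, $\pi^*f$ stands for $\pi_m^*p+\pi_{m-1}^*q$ when $f=[p,q]$, and likewise $\pi_*$ for the pair $(\pi_{m*},\pi_{m-1*})$. By homogeneity we may normalize $\|f\|_{C^1}\le 1$ and set $\delta\defeq\|I_mf\|_{\ell^\infty(\mathcal{C})}$.

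\emph{Step 1 (approximate Liv\v sic).} We would apply the approximate Liv\v sic theorem for Anosov flows (Gou\"ezel--Lefeuvre) to the $C^1$ function $\pi^*f$ on $SM$. The hypothesis is that its integrals over every closed orbit of the magnetic flow are bounded by $\delta$. The conclusion is a decomposition
\[
\pi^* f = Fu + h, \qquad \|h\|_{C^{\beta}} \le C\delta^{\tau_1},
\]
with $u$ Hölder and $\beta,\tau_1\in(0,1)$ depending only on the flow. Since every closed orbit of the magnetic flow is a minimizer $\gamma(c)$ of a free homotopy class, the quantity $I_m f(c)$ controls all periodic orbit integrals. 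At worst we use that the periodic orbits in a class are finite in number, as in the geodesic case.

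\emph{Step 2 (applying $\Pi$).} By Lemma~\ref{lemma:Pi_prop}, $\Pi F=0$, hence $\Pi\pi^*f=\Pi h$. Applying $\pi_*$ and using Lemma~\ref{lemma:Pi_prop}(2) with $0<r'<s<\beta$ gives
\[
\|\Pi_m f\|_{H^{-r'}} \le C\|h\|_{H^{s}} \le C\|h\|_{C^\beta} \le C\delta^{\tau_1},
\qquad \Pi_m \defeq \pi_*\Pi\pi^*.
\]
Next we check that $\Pi_m$ annihilates potential pairs. The magnetic identity $\pi^*D_\mu[\xi,\phi]=F(\pi_1^*\xi+\pi_0^*\phi)$ holds with the appropriate normalization, where the $Y$-entry of $D_\mu$ is exactly what absorbs the Lorentz term in $F$. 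Since $\Pi F=0$, this gives $\Pi_m f=\Pi_m\pi_{\ker D_\mu^*}f$.

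\emph{Step 3 (ellipticity and injectivity).} This is the main obstacle. We must show that $\Pi_m$ is a pseudodifferential operator of order $-1$ on $M$ that is elliptic on $\ker D_\mu^*$. Here the Fourier modes are coupled, so $\Pi_m$ is a $2\times2$ matrix of operators acting on pairs. First we would show, via the microlocal structure of $\Pi$ (its wavefront set being contained in the diagonal together with $E_s^*\times E_u^*$ and the flow-out), that $\Pi_m$ is pseudodifferential. Then we would compute its principal symbol. The magnetic term and $Y$ are lower order, so the principal symbol should be block diagonal and coincide with the geodesic one of \cite{Guillarmou17}. That geodesic symbol is elliptic on the kernel of the principal symbol of $\tr\nabla$, and that kernel coincides with the kernel of the principal symbol of $D_\mu^*$.

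Adding $D_\mu\Delta_\mu^{-1}D_\mu^*$ (with $\Delta_\mu=D_\mu^*D_\mu$) produces an elliptic operator. This yields
\[
\|\pi_{\ker D_\mu^*}f\|_{H^{-1-r'}} \le C\|\Pi_m f\|_{H^{-r'}} + C\|\pi_{\ker D_\mu^*}f\|_{H^{-N}}.
\]
The compact error term is removed by a standard contradiction argument once we know that $\Pi_m$ is injective on $\ker D_\mu^*$. For that, $\Pi_mf=0$ gives $\langle\Pi\pi^*f,\pi^*f\rangle=0$, and positivity of $\Pi$ forces $\pi^*f=Fu$ with $u$ a distribution. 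Regularity of $u$ from Liv\v sic theory then gives $I_mf=0$, and s-injectivity yields $f\in\ran D_\mu$, so $f=0$ on $\ker D_\mu^*$.

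\emph{Step 4 (interpolation).} Write $f_s\defeq\pi_{\ker D_\mu^*}f$. The projection is bounded on $C^1\subset H^{1}$ up to an $\epsilon$ loss, so $\|f_s\|_{H^{1-\epsilon}}\le C\|f\|_{C^1}$. Interpolating between $H^{-1-r'}$ and $H^{1-\epsilon}$ gives, for any $s_0<1-\epsilon$, an estimate
\[
\|f_s\|_{H^{s_0}}\le C\delta^{\tau}\|f\|_{C^1}^{1-\tau},
\]
with $\tau=\tau_1\theta$, where $\theta$ is the interpolation exponent. This is the claimed estimate.
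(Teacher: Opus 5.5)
\paragraph{Verdict.} Your overall scheme is the one behind the result the paper imports from \cite{2025-munoz-thon_Richardson}*{Theorem 1.4}: approximate Liv\v sic, then $\Pi$, then ellipticity on solenoidal pairs, then interpolation. However, Step 3 fails as written, because you work with $\Pi_m=\pi_*\Pi\pi^*$ instead of the paper's normal operator $N_m=\pi_*(\Pi+1\otimes 1)\pi^*$.

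\paragraph{Why Step 3 fails.} The operator $\Pi$ kills constants. Indeed $R_\pm(z)1=-1/z$, so $R_\pm^{\mathrm{hol}}(0)1=0$ and $\Pi 1=0$. Take $m=2$ and $f=[g,0]$, with $g$ the metric itself. Then $\pi_2^*g\equiv 1$ on $SM$ and $D_\mu^*[g,0]=[-\tr\nabla g,0]=0$, so $f$ is solenoidal, yet $\Pi_m f=\pi_*\Pi 1=0$. Hence $\Pi_m$ is not injective on $\ker D_\mu^*$. Your estimate $\|\pi_{\ker D_\mu^*}f\|_{H^{-1-r'}}\le C\|\Pi_m f\|_{H^{-r'}}$, after removing the compact error, is therefore false for this $f$. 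The lemma itself is not contradicted, since $I_2 f(c)=T_c\neq 0$ with $T_c$ the period of $\gamma(c)$. The faulty step is ``positivity of $\Pi$ forces $\pi^*f=Fu$''. The identity $\langle\Pi\pi^*f,\pi^*f\rangle=0$ only says that $\pi^*f$ minus its Liouville average is a coboundary, i.e.\ $\pi^*f=Fu+a$ with $a\in\R$. Then $I_mf(c)=a\,T_c$, which need not vanish, so s-injectivity cannot be invoked.

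\paragraph{The fix.} This is exactly why the rank-one term $1\otimes 1$ appears in $N_m$. It is also why the paper, in the proof of Theorem \ref{thm:local_full}, bounds $|\langle \pi_2^*p+\pi_1^*q,1\rangle|$ separately.
\begin{itemize}
\item With $N_m$, the kernel argument becomes $\langle\Pi\pi^*f,\pi^*f\rangle+|\langle\pi^*f,1\rangle|^2=0$. Both terms are non-negative, so the average vanishes and $\pi^*f=Fu$, as you wanted.
\item In Step 2 you must then also bound the new term, which is easy. Since $\langle Fu,1\rangle=0$ by invariance of the Liouville measure, $\langle\pi^*f,1\rangle=\langle h,1\rangle$, hence $|\langle\pi^*f,1\rangle|\le C\|h\|_{C^\beta}\le C\delta^{\tau_1}$.
\end{itemize}
With this change, the rest of your argument goes through. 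You also need the pseudodifferential and ellipticity property of $N_m$ on solenoidal pairs, taken from \cite{2025-munoz-thon_Richardson}*{Theorem 1.3}; your symbol heuristic (block diagonal, with geodesic blocks) is the right picture.

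\paragraph{A minor point.} $D_\mu\Delta_\mu^{-1}D_\mu^*$ has order $0$, not $-1$, so adding it to an order $-1$ operator does not give an elliptic operator of a single order. Add instead a potential-part term of order $-1$.
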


As was mentioned in the introduction, s-injectivity (and hence, Lemma \ref{lemma:stab}) holds when the Riemannian manifold $(M,g)$ has negative sectional curvature and $\alpha$ is $C^{2}$-small enough by the results in \cite{2025-munoz-thon_Richardson}*{Section 6}. 

We will also need the operator that plays the role of the normal operator associated to $I_{m}$. As in the case of geodesic flows, it is defined in terms of Guillarmou's $\Pi$ operator. We define the \emph{magnetic normal operator}
\[
N_{m}=\begin{pmatrix}
    \pi_{m *} (\Pi+1\otimes 1) \pi_{m}^{*} & \pi_{m *} (\Pi+1\otimes 1) \pi_{m-1}^{*} \\ \pi_{m-1 *} (\Pi+1\otimes 1) \pi_{m}^{*}  & \pi_{m-1 *} (\Pi+1\otimes 1) \pi_{m-1}^{*}
\end{pmatrix},
\]
The following is a consequence of the pseudodifferential nature of $N_{m}$ (\cite{2025-munoz-thon_Richardson}*{Theorem 1.3}). 

\begin{lemma}
    Let $(M,g,\alpha)$ be an Anosov magnetic system. Then for any $s \in \R$, $N_{m}$ is a continuous map from the space $H^{s}(M,S^{m}T^{*}M) \times H^{s}(M,S^{m-1}T^{*}M)$ to $H^{s+1}(M,S^{m}T^{*}M) \times H^{s+1}(M,S^{m-1}T^{*}M)$.
\end{lemma}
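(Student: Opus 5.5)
This lemma is imported from \cite{2025-munoz-thon_Richardson}*{Theorem 1.3}. I would prove it by showing that $N_m$ is a pseudodifferential operator of order $-1$ acting between the bundles $S^mT^*M\oplus S^{m-1}T^*M$. Once that is established, the claim is the standard boundedness $\Psi^{-1}\colon H^s\to H^{s+1}$ on a closed manifold. The smoothing part $\pi_{j*}(1\otimes 1)\pi_k^*$ has a smooth kernel: it equals $h\mapsto \langle \pi_k^*h,1\rangle\,\pi_{j*}1$. So it is bounded $H^s\to H^{t}$ for all $s,t$, and we may discard it. It remains to treat each block $\pi_{j*}\Pi\pi_k^*$ with $j,k\in\{m,m-1\}$.

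\textbf{Step 1: write $\Pi$ as a time integral.} For $\real z>0$ we have $R_+(z)+R_-(z)=-\int_{\R}e^{-|t|z}e^{tF}\,dt$. Insert a cutoff $\chi\in C_c^\infty(\R)$ with $\chi\equiv1$ near $0$, and split
\[
\Pi=\int_{\R}\chi(t)e^{tF}\,dt+\Pi_{\mathrm{far}}.
\]
Here $\Pi_{\mathrm{far}}$ is the contribution of $(1-\chi(t))e^{tF}$, regularized via the meromorphic continuation and with the projector $\Pi_0^\pm$ removed.

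\textbf{Step 2: the short-time part.} The operator $\pi_{j*}\int\chi(t)e^{tF}dt\,\pi_k^*$ has Schwartz kernel obtained by pushing forward along magnetic geodesic segments of length in $\operatorname{supp}\chi$. For small times the exponential map of the magnetic flow is a local diffeomorphism away from the diagonal, exactly as for geodesics. The kernel is therefore conormal to the diagonal with singularity $d(x,y)^{1-n}$ times a smooth function of $(x,\frac{y-x}{|y-x|})$. This is the classical X-ray normal-operator computation, and it gives a $\Psi$DO of order $-1$ with principal symbol computed on the unit sphere $\{v\perp\xi\}$, just as in the Riemannian case. The magnetic term only perturbs lower-order terms, since $Y$ bends geodesics at second order.

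\textbf{Step 3: the long-time part (main obstacle).} Show that $\pi_{j*}\Pi_{\mathrm{far}}\pi_k^*$ is smoothing. I would follow Guillarmou's argument. The wavefront set of $R_\pm^{\mathrm{hol}}(0)$ is contained in $\Delta^*\cup\Omega_\pm\cup(E_\pm^*\times E_\mp^*)$, where $\Omega_\pm$ is the flow-out of the diagonal under the symplectic lift $\Phi_t$. This follows from the radial estimates in the anisotropic spaces $\mathcal{H}^s_\pm$ recalled above. With the time cutoff, the diagonal piece is removed. Next, $\pi_{j*}$ and $\pi_k^*$ are a pushforward and a pullback by a submersion, so their composition only retains covectors annihilating the vertical bundle. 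By the Anosov property, $E_s^*\cup E_u^*$ intersects the annihilator of the vertical bundle trivially, and the flowed-out conormals for $|t|\ge\delta$ do not return to the annihilator of the vertical bundle over $(x,x)$. The second fact requires the absence of conjugate points for the magnetic flow, which Anosov magnetic flows satisfy (Paternain–Paternain). The wavefront calculus then shows the composition is smooth. Verifying these transversality conditions in the magnetic setting, where the vertical bundle is no longer Lagrangian-compatible with the flow in the same way, is the delicate point.

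\textbf{Conclusion.} Combining Steps 1–3, $N_m\in\Psi^{-1}$. Mapping $H^s\to H^{s+1}$ follows for every $s\in\R$, with continuity constants depending on finitely many seminorms of the symbol.
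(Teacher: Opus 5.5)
Your proposal is correct and follows the same route as the paper, which simply invokes \cite{2025-munoz-thon_Richardson}*{Theorem 1.3} (that $N_m$ is a pseudodifferential operator of order $-1$) and reads off the $H^s\to H^{s+1}$ bound; your Steps 1--3 sketch how that cited theorem is proved, via Guillarmou's short-time/long-time splitting and the absence of conjugate points for Anosov magnetic flows. Two precisions for Step 3: the transversality must hold over every pair of base points, not only over $(x,x)$, since the kernel must also be smooth off the diagonal; and the diagonal piece is removed from $\Pi_{\mathrm{far}}$ not by the cutoff alone but through an identity such as $R_+^{\mathrm{hol}}(0)=-\int_0^\infty\chi(t)e^{-tF}\,dt-\int_0^\infty\chi'(t)e^{-tF}\,dt\,R_+^{\mathrm{hol}}(0)+(\text{finite rank})$, which replaces it by a flow-out at times $t\in\operatorname{supp}\chi'$ inside $\{\eta(F)=0\}$.
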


Finally, we record the following version of a theorem by Lopes and Thieullen \cite{Lopes_Thieullen_Livsic} for later use.

\begin{lemma}[Non-negative Liv\v sic Theorem] \label{lemma:non-negative_livsic}
Let $\alpha \in (0,1]$, and let $F$ be the generator of an Anosov magnetic flow associated to the magnetic system $(g_{0},\alpha_{0})$. There exist a constant $C=C(g_{0},\alpha_{0})>0$ and $r \in(0,1)$ such that for any $u \in C^{\beta}(SM)$ with non-negative integral over every $\gamma_{(g_{0},\alpha_{0})}(c)$, there exist $h \in C^{\beta r}(SM)$ and $A \in C^{\beta r}(S M)$ such that $A \geq 0$ and $u+Fh=A$. Moreover, $\|A\|_{C^{\beta r}} \leq C\|u\|_{C^{\beta}}$.
\end{lemma}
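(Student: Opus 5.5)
The plan is to deduce the statement from the theorem of Lopes and Thieullen \cite{Lopes_Thieullen_Livsic} on sub-actions for Anosov flows, applied to the flow $\varphi_t$ on $\mathcal{M}=SM$ generated by $F$. Throughout I read the Hölder exponent in the statement as $\beta$ (the letter $\alpha$ there clashes with the magnetic form). Their theorem is phrased in terms of invariant measures: if $u\in C^\beta(SM)$ satisfies $\int u\,d\mu\ge 0$ for every $\varphi_t$-invariant probability measure $\mu$, then there is a Hölder sub-action $h$, differentiable along $F$, with $u+Fh\ge 0$. It also gives a loss of Hölder exponent $\beta\mapsto\beta r$, with $r\in(0,1)$ depending only on the contraction and expansion rates in \eqref{eq:contr_exp}, together with norm control. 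So the work splits into two parts: converting the orbitwise hypothesis into the measure hypothesis, and then tracking the constants.

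\textbf{Step 1 (closed orbits are the minimizers).} First I show that every periodic orbit of $\varphi_t$ equals $\gamma_{(g_0,\alpha_0)}(c)$ for some free homotopy class $c$. For an Anosov magnetic flow, each nontrivial free homotopy class contains exactly one closed magnetic geodesic. One way to see this is to lift to the universal cover: there the flow has no conjugate points and its orbits stay at bounded distance from the corresponding quasi-geodesics. Combined with \cite{CIPP00}*{Theorem 29}, this identifies the unique closed orbit in $c$ with the action minimizer $\gamma(c)$. Iterates of a primitive orbit are also covered, since they correspond to the class $c^k$. Consequently, $u$ has non-negative integral over every periodic orbit of the flow.

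\textbf{Step 2 (from periodic orbits to all invariant measures).} Next, for a transitive Anosov flow, the normalized Dirac measures $\delta_\gamma=T_\gamma^{-1}\int_0^{T_\gamma}\delta_{\varphi_t(x)}\,dt$ on closed orbits are weak-$*$ dense in the set of invariant probability measures. This is Sigmund's theorem, which follows from the Anosov closing lemma and specification. The magnetic flow preserves the Liouville measure, so it is transitive, and the theorem applies. Since $u$ is continuous, $\int u\,d\mu\ge0$ then holds for every invariant $\mu$. This puts us exactly in the hypothesis of Lopes--Thieullen, which produces $h\in C^{\beta r}(SM)$ with $Fh\in C^{\beta r}$ and $A:=u+Fh\ge 0$.

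\textbf{Step 3 (the estimate, expected main obstacle).} It remains to show $\|A\|_{C^{\beta r}}\le C\|u\|_{C^\beta}$ with $C$ depending only on $(g_0,\alpha_0)$. Since $A=u+Fh$, it suffices to bound $\|Fh\|_{C^{\beta r}}\lesssim\|u\|_{C^\beta}$. Here the structure of the Lopes--Thieullen construction has to be opened up.
\begin{enumerate}
\item They first replace $u$ by a cohomologous function $\tilde u=u+Fh_1$ that is nearly constant along local unstable (or stable) leaves. The coboundary $h_1$ is a convergent sum over the flow with exponentially contracting terms, and it is linear in $u$. Its bound $\|Fh_1\|_{C^{\beta r}}\lesssim\|u\|_{C^\beta}$ comes from \eqref{eq:contr_exp}, and this is where the exponent drops to $\beta r$.
\item They then obtain the sub-action for $\tilde u$ as a supremum or limit of Birkhoff sums along orbit segments. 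Its Hölder constant is controlled by the Hölder constant of $\tilde u$ through the bounded-distortion estimates.
\end{enumerate}
Both bounds are homogeneous of degree one in $u$, and the only data entering the constants are the Anosov constants $C,\lambda$ and the geometry of the local product structure. This gives the claimed $C=C(g_0,\alpha_0)$. I expect this bookkeeping, rather than Steps 1--2, to be the delicate part: one must check that the sub-action step does not introduce a dependence on $\|u\|_{C^0}$ that fails to scale linearly. If it does, I would first try to normalize $\|u\|_{C^\beta}=1$, using the positive homogeneity of the whole construction, and then rescale at the end.
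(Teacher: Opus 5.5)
Your proposal is correct and follows the paper's approach. The paper gives no argument beyond recording the lemma as a version of the Lopes--Thieullen sub-action theorem. Your Steps 1--3 make explicit what that citation leaves implicit: every periodic orbit is some $\gamma_{(g_0,\alpha_0)}(c)$, periodic measures are dense (Sigmund), and the linear bound on $\|A\|_{C^{\beta r}}$ has to be read off from their construction. The one point to tighten is Step 1: your universal-cover sketch does not by itself prove that a class contains only one closed magnetic geodesic, nor that none are contractible. Invoke this as the background fact the paper already assumes ("$\gamma(c)$ is the unique magnetic geodesic in $c$") rather than trying to derive it.
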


\section{Lemmata} \label{section:lemmata}

In this section we prove some technical results that will be used in the proof of our local result. We begin by showing a Taylor expansion bound involving the action and the magnetic X-ray transform. In the next result, $S_{+}^{2}T^{*}M$ denotes the bundle of Riemannian metrics on $M$.

\begin{lemma} \label{lemma:taylor}
    The action $\A_{g_{0},\alpha_{0}}$ is $C^{2}$ near $(g_{0},\alpha_{0})$ in the $C^{4}(M,S_{+}^{2}T^{*}M) \times C^{4}(T^{*}M)$ topology. In particular, there exist a neighborhood $U \subset C^{4}(M,S_{+}^{2}T^{*}M) \times C^{4}(T^{*}M)$ of $(g_{0},\alpha_{0})$, and $C=C(g_{0},\alpha_{0})$ such that for all $(g,\alpha) \in U$
    \begin{align*}
        \left\|\A_{g,\alpha}-\A_{g_{0},\alpha_{0}}-I_{2}^{g_{0},\alpha_{0}}\left[\frac{1}{2}(g-g_{0}),-(\alpha-\alpha_{0})\right]\right\|_{\ell^{\infty}(\mathcal{C})} \\
        \leq C (\|g-g_{0}\|_{C^{4}(M)}^{2}+\|\alpha-\alpha_{0}\|_{C^{4}(M)}^{2}).
    \end{align*}
\end{lemma}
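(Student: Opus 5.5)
The plan is to fix a free homotopy class $c$ and study the map $(g,\alpha)\mapsto \A_{g,\alpha}(c)$ near $(g_0,\alpha_0)$, in the same spirit as the geodesic case of \cite{2019-guillarmou-lefeuvre}, with all bounds uniform in $c$. Since the magnetic flow of $(g_0,\alpha_0)$ is Anosov, structural stability (in its standard $C^1$-on-the-vector-field form) gives a neighborhood $U$ in the $C^4$ topology: the generator $F_{g,\alpha}$ depends $C^2$ on $(g,\alpha)$, because it involves one derivative of $g$ and $\alpha$ through $\nabla$ and $Y$. On $U$ the flow stays Anosov, and the closed orbit $\gamma_{g,\alpha}(c)$ persists and depends $C^2$ on $(g,\alpha)$. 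Concretely, I would use the implicit function theorem for the Anosov closing/shadowing map on loop space. This gives a $C^2$ family $s\mapsto \gamma_s$ of closed curves with periods $T_s$ whenever $(g_s,\alpha_s)$ is a $C^2$ path in $U$. The derivatives $\partial_s\gamma_s$ and $\partial_s^2\gamma_s$ are bounded in $C^0$ uniformly in $c$ once normalised per unit time, since hyperbolicity yields uniform invertibility of the linearised Poincaré problem.

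\textbf{First variation.} Set $g_s=g_0+s(g-g_0)$ and $\alpha_s=\alpha_0+s(\alpha-\alpha_0)$, and write $a(s)=\A_{g_s,\alpha_s}(\gamma_s)$. Because $\gamma_s$ is a critical point of the time-free action functional $\A_{g_s,\alpha_s}$, where both the curve and the period $T$ are varied, the derivative of $a$ in the curve/period direction vanishes. This is exactly why the action, and not the length, is the right quantity. Hence
\[
a'(s)=\frac12\int_0^{T_s}(g-g_0)(\dot\gamma_s,\dot\gamma_s)\,dt-\int_{\gamma_s}(\alpha-\alpha_0)=I_2^{g_s,\alpha_s}\Big[\tfrac12(g-g_0),-(\alpha-\alpha_0)\Big](c).
\]
At $s=0$ this gives the claimed linear term. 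Note that $\gamma_s$ is unit speed for $g_s$, so $T_s$ is the length.

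\textbf{Second order remainder.} By Taylor's formula,
\[
a(1)-a(0)-a'(0)=\int_0^1(1-s)\,a''(s)\,ds,
\]
so it remains to bound $|a''(s)|$ by $C\|(g-g_0,\alpha-\alpha_0)\|_{C^4}^2$ uniformly in $c$ and $s$. Differentiating the formula for $a'$ produces two kinds of terms. Terms of the first kind are $h$ evaluated along $\partial_s\gamma_s$ and $\partial_s T_s$, each linear in $h=(g-g_0,\alpha-\alpha_0)$. Terms of the second kind are $\partial_s\gamma_s$ itself, which is again linear in $h$ through the linearised closing equation. So $a''$ is quadratic in $h$.

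The main obstacle is uniformity in $c$: the period $T_{c}$ is unbounded. A naive bound gives $|a''(s)|\le C\,T_c\|h\|^2$, which is linear in the period. I would follow the geodesic argument in \cite{2019-guillarmou-lefeuvre} and accept this loss, interpreting $\ell^\infty(\mathcal C)$ with the normalisation by $T_c$ (equivalently, comparing $\A(c)/T_c$). Alternatively, the $\partial_s\gamma_s$ term can be controlled using the exponential decay of the linearised flow along stable and unstable bundles, so that the correction is bounded per unit time. The $C^4$ regularity enters here: two derivatives of the flow in $(g,\alpha)$ need two derivatives of the vector field, which itself contains one derivative of $g$ and $\alpha$, plus margin for the $C^1$ closing argument.
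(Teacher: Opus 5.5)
Your argument follows the paper's route. The paper invokes structural stability: the orbit conjugacy $\psi_F$ of de la Llave--Marco--Moriy\'on depends $C^2$ on the generator, after identifying $S_{g_0}M$ with $S_gM$ by fibre rescaling. It concludes that the action of the persisting closed orbit is a $C^2$ function of $(g,\alpha)$, and says the estimate ``follows from Taylor expansion''. You supply what that sentence hides. First, the first variation: criticality of $\gamma_s$ for the free-period action kills the curve and period variations and leaves exactly $I_2^{g_s,\alpha_s}[\tfrac12(g-g_0),-(\alpha-\alpha_0)](c)$; the paper only points to \cite{DPSU07} for this. Second, the integral remainder. One difference: the global conjugacy gives a single neighbourhood for all classes at once. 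Your orbit-by-orbit implicit function theorem instead needs constants independent of the orbit length (essentially the shadowing lemma). You assert this, and that is where the work lies.

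On uniformity in $c$ your hesitation is justified, and in fact decisive. The factor $T_c$ cannot be removed, so the inequality as displayed is false: it uses the unnormalised $I_2$ of Section \ref{sec:magnetic_flows} and the plain supremum over $\mathcal{C}$, and the paper's proof does not address this. Take $\alpha_0=\alpha=0$, $g_0$ negatively curved, $g=(1+\eps)^2g_0$, and let $L(c)$ be the $g_0$-length of the closed $g_0$-geodesic in $c$. Then $\A_{g,0}(c)=(1+\eps)L(c)$, $\A_{g_0,0}(c)=L(c)$ and $I_2^{g_0,0}[\tfrac12(g-g_0),0](c)=(\eps+\tfrac{\eps^2}{2})L(c)$. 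So the remainder is exactly $-\tfrac{\eps^2}{2}L(c)$, unbounded on $\mathcal{C}$, while the right-hand side is $O(\eps^2)$. More structurally, by uniqueness of the closed orbit in each class, $\gamma_{g,\alpha}(c^k)$ is the $k$-fold iterate of $\gamma_{g,\alpha}(c)$. Hence the left-hand side is multiplied by $k$ under $c\mapsto c^k$, and its $\ell^\infty(\mathcal{C})$ norm is either $0$ or $+\infty$.

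So drop your ``alternatively, exponential decay'' sentence. Hyperbolicity is what gives the per-unit-time bound $|a''(s)|\le C\,T_c\|h\|^2$, and nothing beats it. What you actually prove is
\[
\sup_{c\in\mathcal{C}}\frac{1}{T_c}\Bigl|\A_{g,\alpha}(c)-\A_{g_0,\alpha_0}(c)-I_2^{g_0,\alpha_0}\bigl[\tfrac12(g-g_0),-(\alpha-\alpha_0)\bigr](c)\Bigr|\le C\bigl(\|g-g_0\|_{C^4}^2+\|\alpha-\alpha_0\|_{C^4}^2\bigr),
\]
with $T_c$ the $(g_0,\alpha_0)$-period, and this is the correct statement. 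It is the analogue of the expansion of $L_g/L_{g_0}$ with the averaged X-ray transform in \cite{2019-guillarmou-lefeuvre}. The same normalisation is needed wherever $\ell^\infty(\mathcal{C})$ norms of $I_2$ or of action differences are used later.

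A clean way to get constants independent of $c$ is as follows. Let $\varphi^0_t$ and $\varphi_t$ be the $(g_0,\alpha_0)$- and $(g,\alpha)$-flows, and $z$ a point on the closed orbit of $c$. Write $\psi(\varphi^0_uz)=\varphi_{\tau(u,z)}(\psi(z))$ with $\tau(u,z)=\int_0^u\kappa(\varphi^0_rz)\,dr$. Then $\A_{g,\alpha}(c)=\int_0^{T_c}b_{g,\alpha}(\varphi^0_tz)\,dt$ with $b_{g,\alpha}=\kappa\cdot\bigl((1-\pi_1^*\alpha)\circ\psi\bigr)$. Granted that $(g,\alpha)\mapsto(\psi,\kappa)$ is $C^2$ into $C^0$ (the same structural-stability input as the paper), Taylor-expand $b$ in sup norm and average along the orbit.
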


\begin{proof}
This is a consequence from structural stability of Anosov flows. Following the proof of \cite{2019-guillarmou-lefeuvre}*{Proposition 2.1}, we write $\mathcal{M}=S_{g_{0}}M$, and let $F_{0}$ be the magnetic vector field induced by $(g_{0},\alpha_{0})$. From perturbation theory of Anosov flows (see \cite{de_la_Llave_Marco_Moriyon_perturbation}*{Theorem A.2} and the remarks below it), there is a neighborhood $\mathcal{V}_{F_{0}} \subset C^{2}(\mathcal{M}:T \mathcal{M})$ of $F_{0}$, such that for each $F \in \mathcal{V}_{F_0}$, there exists an orbit conjugacy $\psi_F \colon \mathcal{M} \to S_g M$ which is isotopic to the identity. Moreover, 
\[
\begin{split}
    \mathcal{V}_{F_0} &\to C(\mathcal{M}, S_g M), \\ 
    F &\mapsto \psi_F 
\end{split}
\]
is a $C^2$ function. In particular, for $c$ a free homotopy class of paths in $M$, and $\gamma_{g_0, \alpha_0}(c)$ the unique closed orbit of $F_0$ in $c$, we have $\psi_{F} \circ \gamma_{g_0, \alpha_0}$ is the unique closed orbit for $F$ in $c$ (where we identify $S_{g_0} M$ with $S_g M$ by rescaling the fiber). 

As a consequence, the action $F_{g,\alpha} \mapsto \mathbb{A}(\gamma_{g, \alpha})$ is a $C^{2}$ function of the deformation $F_{g,\alpha}$. The result now follows from Taylor expansion of the action.    
\end{proof}

The next ingredient that we need is a reduction to the case of potential tensors: one can deform magnetic systems in a neighborhood of a given one, so that the deformation is solenoidal. The proof on closed Riemannian case goes back to Ebin \cite{Ebin_slice}. For smooth magnetic systems, a proof in the case of manifolds with boundary can be found in \cite{DPSU07}*{Lemma 6.6}. We give a proof in the $C^{k,\beta}$ topology for convenience of the reader. In the next statement, $D_{\mu}^{*}$ is the magnetic divergence corresponding to $(g_{0},\alpha_{0})$, and $\mathrm{Diff}_{0}(M)$ is the set of diffeomorphism homotopic to the identity. 

\begin{lemma}[Slice Lemma] \label{lemma:slice}
   Let $k \geq 2$ be an integer and let $\beta \in (0,1)$ and assume $[g_0, \alpha_0]$ to be of regularity $C^{k+2,\beta}$. Then there exists some $\eps > 0$ such that for all $g, \alpha$ satisfying
	\[
		\|[g, \alpha] - [g_0, \alpha_0]\|_{C^{k, \beta}} < \eps,
	\]
	there exists a unique $\phi \in \Diff_0^{k+1, \beta}(M)$ close to $\id$ in the $C^{k+1, \beta}$ topology together with some $\varphi \in C^{k+1, \beta}(M)$ so that
    \[
		D^*_{\mu} \l(\l[\frac{1}{2}\phi^*g, -\phi^*\alpha-d\varphi\r]- \l[\frac{1}{2}g_0, -\alpha_0\r]\r) = 0.
	\]
    Furthermore, there is some $C > 0$ so that we have the bound
    \[
        \l\|\l[\frac{1}{2}\phi^*g, -\phi^*\alpha-d\varphi\r]- \l[\frac{1}{2}g_0, -\alpha_0\r]\r\|_{C^{k,\beta}}
        \leq C\|[g, \alpha] - [g_0, \alpha_0]\|_{C^{k, \beta}}.
    \]
    \label{lem:slice-old}
\end{lemma}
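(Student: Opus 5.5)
We will prove the Slice Lemma with the implicit function theorem on Banach spaces of H\"older sections, in the spirit of Ebin's original argument. Write diffeomorphisms near the identity as $\phi_X(x)=\exp^{g_0}_x(X(x))$ with $X\in C^{k+1,\beta}(M,TM)$ small. Every element of $\Diff_0^{k+1,\beta}(M)$ close to $\id$ has this form, and $X=0$ gives $\id$. Consider the map
\[
\Phi \colon C^{k+1,\beta}(TM)\times C^{k+1,\beta}_{\perp}(M)\times \mathcal{U} \to C^{k-1,\beta}(T^*M)\times C^{k-1,\beta}_{\perp}(M),
\]
where $\mathcal{U}$ is a $C^{k,\beta}$ neighborhood of $[g_0,\alpha_0]$ and $\perp$ denotes functions of zero mean. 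It is defined by
\[
\Phi(X,\varphi,[g,\alpha]) = D_\mu^*\l(\l[\tfrac12\phi_X^*g,-\phi_X^*\alpha-d\varphi\r]-\l[\tfrac12 g_0,-\alpha_0\r]\r).
\]
The second component of $D_\mu^*$ is a divergence of a $1$-form, so it automatically has zero mean, and the target is correct. We must check that $\Phi$ is $C^1$. Pullback by $\phi_X$ loses one derivative in $X$ when $g$ is only $C^{k,\beta}$, which is the usual subtlety. The standard remedy is to write $\phi_X^*g-g_0=\phi_X^*(g-g_0)+(\phi_X^*g_0-g_0)$. The second term is smooth in $X$ because $g_0\in C^{k+2,\beta}$. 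The first is continuous, and differentiable in $X$ only into $C^{k-1,\beta}$. So I will use the implicit function theorem in the form that requires $C^1$ dependence on the unknowns $(X,\varphi)$ for each fixed parameter, together with continuity in the parameter. Equivalently, I will run a contraction mapping argument by hand.

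The key step is to compute the linearization at $(0,0,[g_0,\alpha_0])$ in $(X,\varphi)$. Let $\xi=X^\flat$. Then $\frac{d}{dt}\phi_{tX}^*g_0=\mathcal L_Xg_0=2D\xi$ and $\frac{d}{dt}\phi_{tX}^*\alpha_0=\mathcal L_X\alpha_0=d(\alpha_0(X))+\iota_X d\alpha_0$. By the definition of the Lorentz force, $\iota_Xd\alpha_0=\pm Y(\xi)$, and I expect the sign to produce $-Y\xi$. Then
\[
d_{(X,\varphi)}\Phi = -D_\mu^* D_\mu \begin{pmatrix}\xi\\ \psi\end{pmatrix},\qquad \psi=\varphi+\alpha_0(X)
\]
up to sign conventions, which I will fix by matching with the matrix $D_\mu$. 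The change of variables $(X,\varphi)\mapsto(\xi,\psi)$ is an isomorphism, modulo adjusting constants; fix $\psi$ to have mean zero, which pins down $\varphi$. The operator $D_\mu^*D_\mu$ is elliptic of order two. Its principal symbol is that of $\mathrm{diag}(D^*D,\ \delta d)$, since $Y$ is lower order. By \cite{2025-munoz-thon_Richardson}*{Section 4} its kernel is trivial once one restricts to the complement of the kernel of $D_\mu$. For Anosov magnetic systems the kernel of $D_\mu$ consists only of pairs $(0,c)$, with $c$ a constant. Indeed, $D\xi=0$ gives a Killing field, and $Y\xi+d\psi=0$ would then make the Killing field generate a symmetry of the flow. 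This contradicts the Anosov property, by the same argument used for the solenoidal decomposition in the cited reference.

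Elliptic Schauder theory then shows that $D_\mu^*D_\mu\colon C^{k+1,\beta}\times C^{k+1,\beta}_\perp\to C^{k-1,\beta}\times C^{k-1,\beta}_\perp$ is an isomorphism. It is Fredholm of index zero because it is formally self-adjoint, and it is injective on the mean-zero complement. The implicit function theorem then gives, for $[g,\alpha]$ close to $[g_0,\alpha_0]$, a unique small $(X,\varphi)$ with $\Phi=0$. Moreover $\|X\|_{C^{k+1,\beta}}+\|\varphi\|_{C^{k+1,\beta}}\le C\|[g,\alpha]-[g_0,\alpha_0]\|_{C^{k,\beta}}$. The final bound then follows from the decomposition $\phi^*g-g_0=\phi^*(g-g_0)+(\phi^*g_0-g_0)$. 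The first term is controlled by $\|g-g_0\|_{C^{k,\beta}}$, since $\phi$ is $C^{k+1,\beta}$-close to $\id$. The second is controlled by $C\|X\|_{C^{k+1,\beta}}$. The $\alpha$ part is handled the same way, and $d\varphi$ is bounded directly.

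The main obstacle is the regularity bookkeeping for the nonlinear map, that is, the derivative loss of $X\mapsto\phi_X^*g$ when $g$ is only $C^{k,\beta}$. If the implicit function theorem with a parameter becomes awkward, I will fall back on a Newton/contraction iteration. Set $(X,\psi)_{n+1}=(X,\psi)_n-(D_\mu^*D_\mu)^{-1}\Phi(\cdot)_n$, and use the tame estimate for the quadratic remainder, $\|\Phi(X,\varphi,[g,\alpha])-\Phi(0,0,[g,\alpha])-d\Phi(X,\varphi)\|_{C^{k-1,\beta}}\le C(\|X\|_{C^{k+1,\beta}}+\|[g,\alpha]-[g_0,\alpha_0]\|_{C^{k,\beta}})\|X\|_{C^{k+1,\beta}}$. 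This estimate suffices for a contraction on a small ball.
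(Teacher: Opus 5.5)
Your route is the paper's: the implicit function theorem in $(X,\varphi)$, a linearization equal to $D_\mu^*D_\mu$ composed with the invertible change of variables $(X,\varphi)\mapsto[X^\flat,-\iota_X\alpha_0-\varphi]$, then $\ker D_\mu=\{[0,c]\}$ and invertibility. Your $-D_\mu^*D_\mu(\xi,\psi)$ is off by a sign in one slot, which is harmless. You are more careful than the paper in two places. You restrict the second target factor to mean-zero functions, without which the differential is not onto. You also invoke Schauder theory for the isomorphism. For the nonlinear step, the paper simply asserts that its map $F$ is $C^1$. You are right that this cannot hold literally when $[g,\alpha]$ is only $C^{k,\beta}$, but neither of your two remedies repairs it.

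\textbf{Remedy (i) fails.} Freezing $[g,\alpha]\neq[g_0,\alpha_0]$ does not remove the loss. With $[p,q]=[g-g_0,\alpha-\alpha_0]$, the map $X\mapsto D_\mu^*\phi_X^*[p,q]$ still pulls back the rough pair. Its formal derivative in direction $Z$ is $D_\mu^*[\tfrac12\mathcal{L}_Zp,-\mathcal{L}_Zq]$, which contains second derivatives of $[p,q]$ and so lies only in $C^{k-2,\beta}$. Hence the map is not $C^1$ into $C^{k-1,\beta}$, and in general not even continuous.

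\textbf{Remedy (ii) fails.} Your remainder contains $D_\mu^*(\phi_X^*p-p)$. The claimed bound would need $\|\phi_X^*p-p\|_{C^{k,\beta}}\lesssim\|p\|_{C^{k,\beta}}\|X\|_{C^{k+1,\beta}}$, which costs a derivative of $p$ you do not have. The top derivatives of $p$ are merely $C^{0,\beta}$, and already $h(x)=|x|^\beta$ on $\mathbb{R}$ has $\|h(\cdot+t)-h\|_{C^{0,\beta}}\geq 2$ for all $t\neq 0$. The same defect breaks the Lipschitz estimate, so the iteration does not contract from $C^{k+1,\beta}$ to $C^{k-1,\beta}$.

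\textbf{The fix is naturality.} Set $\psi=\phi^{-1}$ and $\tilde\varphi=\varphi\circ\psi$. Then $D^*_{\mu,(g_0,\alpha_0)}(\phi^*u)=\phi^*\bigl(D^*_{\mu,(\psi^*g_0,\psi^*\alpha_0)}u\bigr)$ and $d\varphi=\phi^*d\tilde\varphi$, so the equation is equivalent to
\[
D^*_{\mu,(\psi^*g_0,\psi^*\alpha_0)}\Bigl(\bigl[\tfrac12 g,\,-\alpha-d\tilde\varphi\bigr]-\psi^*\bigl[\tfrac12 g_0,\,-\alpha_0\bigr]\Bigr)=0 .
\]
Here only the $C^{k+2,\beta}$ background is pulled back. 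This map is genuinely $C^1$ in $(\psi,\tilde\varphi)$ and affine in $[g,\alpha]$. Multiply the second component by $\psi^*d\mathrm{vol}_{g_0}/d\mathrm{vol}_{g_0}$ so that it lands in your fixed mean-zero space. The differential at $(\id,0,[g_0,\alpha_0])$ is then $-D_\mu^*D_\mu[Z^\flat,\chi-\iota_Z\alpha_0]$. Your invertibility argument and your final bound go through unchanged.

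Alternatively, a two-norm contraction works. Keep the iterates in a small $C^{k+1,\beta}$ ball but contract in $C^{k,\beta}$, with values in $C^{k-2,\beta}$. This uses $\|\phi_{X_1}^*p-\phi_{X_2}^*p\|_{C^{k-1,\beta}}\lesssim\|p\|_{C^{k,\beta}}\|X_1-X_2\|_{C^{k,\beta}}$ and the fact that $C^{k+1,\beta}$ balls are closed in $C^{k,\beta}$. This is where $k\geq 2$ is needed.
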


\begin{proof}
We will apply the implicit function theorem. Let $U_0 \subset C^{k+1, \beta}(M;  TM)$ be a neighborhood of the $0$-section and consider the map
\[
\begin{split}
	F\colon U_0 \times C^{k+1, \beta}(M)
	\times C^{k, \beta}(M , S^2T^*M) &\times C^{k+1, \beta}( T^*M)\\
	\to C^{k-1, \beta}(T^*M) &\times C^{k-1, \beta}(M)
\end{split}
\]
defined by
\[
    F([V, \varphi], [p, q]) = D_{\mu}^*\l(\frac{1}{2}\l[e^*_V(g_0 + p), e^*_V(-\alpha_0 + q) -d\varphi\r]\r) - D_{\mu}^*\l(\l[\frac{1}{2}g_0, -\alpha_0\r]\r)
\]
where given $V \in C^{k+1, \beta}$, the map $e_V$ denotes
\[
C^{k+1, \beta}(M, T M) \to \diff^{k+1, \beta}(M), \quad e_{V}(x):=\exp _{x}(V(x))
\]
defined using the exponential map induced by $g_{0}$. Note that for $V \in U_0$, $V \mapsto e_V$ defines a well-defined smooth diffeomorphism from a small $C^{k+1, \beta}$-neighborhood of the zero section onto a neighborhood of the identity in $\diff^{k+1, \beta}(M)$.
We want to solve, locally, the equation $F([V(p, q), \varphi(p, q)], [p, q]) = 0$. 
To apply the implicit function theorem, we must show $d_{[V, \varphi]}F([0,0], [0,0])$ is an isomorphism.
	Indeed, following the ideas of \cite{DPSU07}*{Lemma 6.6}, we compute
	\begin{align*}
		d_{[V, \varphi]}F([0,0], [0,0])([Z, \psi])
		&= D^*_{\mu}\l(\l[\frac{1}{2} \mc{L}_Z g_0, \mc{L}_Z(-\alpha_0)-d\psi\r]\r) \\
        &= D^*_{\mu}([D Z^{\flat}, -d \iota_Z \alpha_0 - \iota_Z d\alpha_0 - d\psi])\\
        &= D^*_{\mu}([D Z^{\flat}, d(-\iota_Z \alpha_0 - \psi) + Y(Z^{\flat})])\\
        &= D^*_{\mu}D_{\mu}[Z^{\flat}, -\iota_Z\alpha_0 - \psi],
	\end{align*}
    where in the second equality we used $\mc{L}_Z g_0 = 2DZ^{\flat}$ (where $\flat \colon TM \to T^{*}M$ denotes the musical isomorphism) and Cartan's magic formula, in the third the definition of $Y$ (where $Y$ is the Lorentz force corresponding to $(g_{0},\alpha_{0})$), and in the last one we used the definition of the magnetic divergence. Therefore we have the following explicit form for the differential:
	\begin{equation}
	    d_{[V, \varphi]}F([0,0], [0,0])([Z, \psi])
		= D^*_{\mu} D_{\mu}[Z^{\flat}, -\iota_Z \alpha_0 - \psi].
        \label{eq:inv_der}
	\end{equation}
where $Z \in C^{k+1,\beta}(M,TM)$, $\psi \in C^{k+1,\beta}(M)$ is orthogonal to constants, and $\flat \colon TM \to T^{*}M$ is the musical isomorphism induced by $g_{0}$. Recall \cite{2025-munoz-thon_Richardson}*{Section 4} that $\ker D_{\mu}=\{[0,c]:c \in \R\}$. Since $\psi$ is orthogonal to constants, and the map $[Z,\psi] \mapsto [Z^{\flat},\iota_{Z}(-\alpha_{0})+\psi]$ is invertible, we conclude that the operator on the right-hand side of \eqref{eq:inv_der} is invertible. Thus, by the implicit function theorem for Banach spaces, we have such a neighborhood $U$ so that $F([V(p, q), \varphi(p, q)], [p, q]) = 0$. Moreover, $[V(p, q), \varphi(p, q)]$ is the unique solution in the neighborhood. Finally, note $F$ is $C^1$ and so the bound follows from the Lipschitz property of the map.    
\end{proof}


\section{Proof of Theorem \ref{thm:local_full}} \label{red:section:proof_local}

We follow \cite{2019-guillarmou-lefeuvre}. Let $(g_{0},\alpha_{0})$ as in the statement. Fix $N \geq 3$ and let $\beta \in (0,1)$ be small. Since $(g,\alpha)$ is in a $\eps$-neighborhood of $(g_{0},\alpha_{0})$, the Slice Lemma (Lemma \ref{lemma:slice}) ensures the existence of a diffeomorphism $\phi$ and a function $\varphi$ such that for
\[
    f = \l[\frac{1}{2}\phi^*g, -\phi^*\alpha-d\varphi\r]- \l[\frac{1}{2}g_0, -\alpha_0\r],
\]
we have $D_{\mu}^{*}f=0$. We will write $f = [p, q]$ and our objective is to show $f \equiv 0$. As a first step, we apply the non-negative Liv\v sic Theorem. Indeed, consider any free homotopy class $c$, let $\gamma = \gamma_{g_0, \alpha_0}(c) \colon [0,T] \to M$ be the unique magnetic geodesic in the homotopy class, and compute
\begin{align*}
    I_{2}^{g_{0},\alpha_{0}} f (c)=\l(\int_{\gamma} \frac{1}{2}\pi_2^* g +\frac{T}{2}- \int_{\gamma}\pi_1^* \alpha\r)
    - \l(\int_{\gamma} \frac{1}{2}\pi_2^* g_0+\frac{T}{2} - \int_{\gamma}\pi_1^* \alpha_0\r).
\end{align*}
Now observe the last term on the right-hand side is precisely the magnetic action $\A_{g_0, \alpha_0}(c)$. The first term on the right-hand side is $\A_{\phi^* g, \phi^{*}\alpha+d\varphi}(\gamma)$, which can be bounded from below by $\A_{\phi^{*}g, \phi^{*}\alpha+d\varphi}(c)$ since $\gamma$ does not necessarily minimize the action with respect to $(\phi^{*}g,\phi^{*}\alpha+d\varphi)$. Since $\A_{\phi^{*}g, \phi^{*}\alpha+d\varphi}(c)=\A_{g, \alpha}(c)$, we conclude that
\begin{align*}
    I_{2}^{g_{0},\alpha_{0}}(c)
    \geq \A_{g, \alpha}(c) - \A_{g_0, \alpha_0}(c) = 0,
\end{align*}
where we used our central assumption that the marked magnetic action spectrum of these two magnetic systems are equal.
Hence, by the non-negative Liv\v sic Theorem (Lemma \ref{lemma:non-negative_livsic}), there exists $h,A \in C^{r}(SM)$ with $r \in (0,1)$ such that $\pi_{2}^{*}p+\pi_{1}^{*}q+Fh=A \geq 0$, and
\begin{equation}
\label{eq:nnl}
    \| \pi_{2}^{*}p+\pi_{1}^{*}q+Fh \|_{C^{\beta r}} \leq \| \pi_{2}^{*}p+\pi_{1}^{*}q \|_{C^{\beta}}. 
\end{equation}
Now, take $0 <s\ll r$ and use the stability of the normal operator $N_2$ over the solenoidal tensor pair $f = [p,q]$ to compute
\[
\begin{split}
    \|f\|_{H^{-1-s}} &\lesssim \|N_{2}f\|_{H^{-s}}
    \lesssim \| (\Pi+1 \otimes 1)(\pi_{2}^{*}p+\pi_{1}^{*}q) \|_{H^{-s}}
\end{split}
\]
where the second inequality follows from the definition of $N_2$ and the boundedness of $\pi_{1*}, \pi_{2*} \colon H^{-s}(SM) \to H^{-s}(M)$. Applying the triangle inequality to the above, then using $\Pi F = 0$ (Lemma \ref{lemma:Pi_prop} (1)) and that $F$ preserves the Liouville measure allows us to continue this inequality chain:
\begin{equation}
\begin{split}
\|f\|_{H^{-1-s}} 
&\lesssim \| \Pi(\pi_{2}^{*}p+\pi_{1}^{*}q) \|_{H^{-s}} + |\langle \pi_{2}^{*}p+\pi_{1}^{*}q,1 \rangle|_{L^{2}}\\
&\lesssim \| \Pi(\pi_{2}^{*}p+\pi_{1}^{*}q+Fh) \|_{H^{-s}} + |\langle \pi_{2}^{*}p+\pi_{1}^{*}q +Fh,1 \rangle|_{L^{2}}.
\end{split}
\label{eq:first-bound}
\end{equation}
Now we apply the boundedness of $\Pi \colon H^{s}(SM) \to H^{-s}(SM)$ (Lemma \ref{lemma:Pi_prop}), then Cauchy--Schwarz to further continue this inequality chain and conclude
\[
\begin{split}
    \|f\|_{H^{-1-s}} 
    &\lesssim \| \pi_{2}^{*}p+\pi_{1}^{*}q+Fh\|_{H^{s}} + |\langle \pi_{2}^{*}p+\pi_{1}^{*}q +Fh,1 \rangle|_{L^{2}} \\
    & \lesssim  \| \pi_{2}^{*}p+\pi_{1}^{*}q+Fh\|_{H^{s}} \\
    & \lesssim  \| \pi_{2}^{*}p+\pi_{1}^{*}q+Fh\|_{L^{2}}^{1-\nu} \| \pi_{2}^{*}p+\pi_{1}^{*}q+Fh\|_{H^{r'}}^{\nu}
\end{split}
\]
where in the last step we take $s < r' < \beta r$ and interpolate with $\nu=s/r'$.
Now we proceed to bound each term in the right-hand side. First, observe
\begin{equation} 
\begin{split}
    \| \pi_{2}^{*}p+\pi_{1}^{*}q+Fh\|_{H^{r'}} &\lesssim \|\pi_{2}^{*}p+\pi_{1}^{*}q+Fh\|_{C^{r'}} \\ 
    & \lesssim \|\pi_{2}^{*}p+\pi_{1}^{*}q+Fh\|_{C^{\beta r}} \\
    & \lesssim \|\pi_{2}^{*}p+\pi_{1}^{*}q\|_{C^{\beta}},
\end{split}
\label{eq:2nd-term}
\end{equation}
where we used \eqref{eq:nnl} in the third inequality. On the other hand, we can interpolate between Lebesgue spaces to estimate
\begin{equation}
\begin{split}
    \|\pi_{2}^{*}p+\pi_{1}^{*}q+Fh\|_{L^{2}} &\lesssim \| \pi_{2}^{*}p+\pi_{1}^{*}q+Fh\|_{L^{1}}^{1/2} \| \pi_{2}^{*}p+\pi_{1}^{*}q+Fh\|_{L^{\infty}}^{1/2} \\
    &\lesssim \| \pi_{2}^{*}p+\pi_{1}^{*}q\|_{L^{1}}^{1/2} \| \pi_{2}^{*}p+\pi_{1}^{*}q+Fh\|_{C^{\beta r}}^{1/2}
    \label{eq:1st-term-1}
\end{split}
\end{equation}
where in the second inequality, we used $\pi_{2}^{*}p+\pi_{1}^{*}q+Fh > 0$ and that $F$ preserves the Liouville measure to make the bound 
\[\|\pi_{2}^{*}p+\pi_{1}^{*}q+Fh\|_{L^1} = \ip{\pi_{2}^{*}p+\pi_{1}^{*}q+Fh}{1}_{L^2} = \ip{\pi_{2}^{*}p+\pi_{1}^{*}q}{1}_{L^2} \leq \|\pi_{2}^{*}p+\pi_{1}^{*}q\|_{L^1}.
\]
Continuing the inequality chain \eqref{eq:1st-term-1} by first making standard inclusions, then by applying the stability estimate from Lemma \ref{lemma:stab} (recall $D_{\mu}^{*}f=0$) we find
\begin{equation}
\begin{split}
    \|\pi_{2}^{*}p+\pi_{1}^{*}q+Fh\|_{L^{2}}
    &\lesssim \| \pi_{2}^{*}p+\pi_{1}^{*}q\|_{H^{s_{0}}}^{1/2} \| \pi_{2}^{*}p+\pi_{1}^{*}q\|_{C^{\beta}}^{1/2} \\
    & \lesssim \|I_{2}f\|_{\ell^{\infty}(\mathcal{C})}^{\tau/2}\|f\|_{C^{1}}^{(1-\tau)/2} \|f\|_{C^{\beta}}^{1/2} \\
    & \lesssim \|f\|_{C^{4}}^{\tau}\|f\|_{C^{1}}^{(1-\tau)/2} \|f\|_{C^{\beta}}^{1/2} \\
    & \lesssim \|f\|_{C^{4}}^{(1+\tau)/2} \|f\|_{C^{\beta}}^{1/2}
\end{split}
\label{eq:1st-term-2}
\end{equation}
where in the third step we used that $(g_{0},\alpha_{0})$ and $(g,\alpha)$ have the same marked magnetic action and Lemma \ref{lemma:taylor}. In summary, by applying  \eqref{eq:2nd-term} and \eqref{eq:1st-term-2} to the bound \eqref{eq:first-bound}, we have
\begin{equation} \label{eq:ineq_smooth_reg}
    \|f\|_{H^{-1-s}} \leq C \|f\|_{C^{4}}^{(1+\tau)(1-\nu)/2} \|f\|_{C^{\beta}}^{(1+\nu)/2},
\end{equation}
where $C=C(g_{0},\alpha_{0},s,r,n)$. Let $j \in \{\beta,4\}$, and let $N_{0} \geq n/2+j+s$. By Interpolation and the Sobolev embedding we have
\begin{equation} \label{eq:sobolev}
    \|f\|_{C^{j}} \leq\|f\|_{H^{n/2+j+s}} \leq C\|f\|_{H^{-1-s}}^{1-\theta_{j}}\|f\|_{H^{N_{0}}}^{\theta_{j}},
\end{equation}

where $\theta_{j}=(n/2+j+1+2s)/(N_{0}+s+1)$. If we take $0<s<r'<\beta r \ll \beta$ small enough, and $N_{0}$ big enough, we have
\begin{equation} \label{eq:gamma}
    \gamma:=\frac{1}{2}(1-\theta_{\beta})(1+\nu)+\frac{1}{2}(1+\tau)(1-\theta_{3})(1-\nu)>1.
\end{equation}
Write $\gamma':=(1+\nu)\theta_{\beta}/2 +(1-\nu)\theta_{3}(1+\tau)/2>0$. Then, \eqref{eq:sobolev} together with \eqref{eq:ineq_smooth_reg} give
\begin{equation} \label{eq:final_ineq}
    \|f\|_{H^{-1-s}} \leq C\|f\|_{H^{-1-s}}^{\gamma} \|f\|_{H^{N_{0}}}^{\gamma'}. 
\end{equation}
Now, assume for sake of contradiction that $f \neq 0$. Then, it follows from \eqref{eq:final_ineq} that
\[
1 \leq C \|f\|_{H^{N_{0}}}^{\gamma-1+\gamma'}.
\]
This gives a contradiction thanks to \eqref{eq:gamma}. Hence, it is enough to take $N \geq N_{0}$ to obtain the result.

\section{Proof of Theorem \ref{thm:stability}} \label{sec:proof_stability}

Let $f$ be as in the proof of Theorem \ref{thm:local_full}. Note that since $\phi$ is close to the identity, we can assume that $\|f\|_{C^{N}}<\delta$, where $\delta$ is a small constant. Then, it follows from Lemmas \ref{lemma:stab} and \ref{lemma:taylor} that for $s > 0$ small enough we have
\begin{align*}
    \| f\|_{H^{s}} &\lesssim \|I_{2}f \|_{\ell^{\infty}(\mathcal{C})}^{\tau} \|f\|_{C^{1}}^{1-\tau} \\
    &\lesssim (\|\A_{g,\alpha}-\A_{g_{0},\alpha_{0}}\|_{\ell^{\infty}(\mathcal{C})}+\|f\|_{C^{4}}^{2})^{\tau} \|f\|_{C^{1}}^{1-\tau} \\
    & \lesssim \|\A_{g,\alpha}-\A_{g_{0},\alpha_{0}}\|_{\ell^{\infty}(\mathcal{C})}^{\tau}\|f\|_{C^{1}}^{1-\tau}+ \|f\|_{C^{4}}^{2\tau}\|f\|_{C^{1}}^{1-\tau}.
\end{align*}
As in the proof of \cite{2019-guillarmou-lefeuvre}*{Theorem 3}, we will use interpolation to absorb the last term on the right-hand side on the left-hand side. To do so, let $j \in\{1,4\}$. In virtue of the Sobolev embedding $H^{n/2+j+s} \hookrightarrow C^{j}$ and interpolation between $H^{s}$ and $H^{N_{0}}$ (where $N_{0} > n/2+j+s$) we obtain
\[
\|f\|_{C^{j}} \lesssim \|f\|_{H^{n/2+j+s}} \lesssim \|f\|_{H^{s}}^{1-\theta_{j}} \|f\|_{H^{N_{0}}}^{\theta_{j}},
\]
where $\theta_{j}=(n/2+j)/(N_{0}-s)$. Hence, writing
\[  
\gamma=(1-\theta_{4})2\tau+(1-\theta_{1})(1-\tau), \qquad \gamma'=2\tau \theta_{4}+(1-\tau)\theta_{1},
\]
we have that for small $s$ and $N_{0}$ big enough, $\gamma>1$ and $\gamma'>0$. Hence,
\[
\|f\|_{C^{4}}^{2\tau}\|f\|_{C^{1}}^{1-\tau} \lesssim \|f\|_{H^{s}}^{\gamma}\|f\|_{H^{N_{0}}}^{\gamma'} \lesssim \|f\|_{H^{s}}^{\gamma}\|f\|_{C^{N_{0}}}^{\gamma'} \lesssim \|f\|_{H^{s}}\|f\|_{C^{N_{0}}}^{(\gamma-1)+\gamma'}.
\]
So far, we have obtained that for $N>N_{0}$, there exist $C>0$ with
\[ 
\|f\|_{H^{s}} \leq C(\|\A_{g,\alpha}-\A_{g_{0},\alpha_{0}}\|_{\ell^{\infty}(\mathcal{C})}^{\tau}\|f\|_{C^{1}}^{1-\tau}+\|f\|_{H^{s}}\|f\|_{C^{N_{0}}}^{(\gamma-1)+\gamma'}).
\]
Taking $\delta$ small enough so that $C\delta^{\gamma-1+\gamma'} \leq 1/2$, we obtain the claimed absorption. The result now follows from the bounds presented here and in Lemma \ref{lemma:slice}.

\section{Proof of Theorem \ref{thm:conformal}} \label{section:proof_conformal}

Let us begin by assuming without loss of generality, that $\vol(g_{2}) \leq \vol (g_{1}) = 1$. Then, 
\begin{equation} \label{eq:energy}
    \int_{M}e^{2f}d\vol_{g_{1}} \leq \left( \int_{M}e^{nf} d\vol_{g_{1}}\right)^{\frac{2}{n}} \vol(g_{1})^{\frac{n-2}{n}}
    \leq  \vol(g_{2})^{\frac{2}{n}} \leq 1,
\end{equation}
where in the first step we used H\"older's inequality, and in the second and third steps we used the normalization assumption on the volume. In a similar fashion, we have
\begin{equation}\label{eq:length}
    \int_{M}e^{f}d\vol_{g_{1}} \leq \left( \int_{M}e^{nf} d\vol_{g_{1}} \right)^{\frac{1}{n}} \vol(g_{1})^{\frac{n-1}{n}} \leq\vol(g_{2})^{\frac{1}{n}} \leq 1.
\end{equation}
After adding \eqref{eq:energy} and \eqref{eq:length}, for sake of contradiction, assume that the inequality is strict, i.e., there exists an $\eps>0$ with 
\begin{equation} \label{eq:ineq-contradiction-conformal}
    \frac{1}{2}\left(\int_{M}e^{2f}d \vol_{g_{1}}+\int_{M}e^{f}d\vol_{g_{1}} \right) <1-\eps.
\end{equation}
Let $\mu_{g_{1}}$ be the normalized Liouville measure. Then, as a consequence of Birkhoff's Ergodic Theorem (see for instance \cite{Lefeuvre_book}*{Lemma 8.4.9}), there exists a sequence of closed magnetic geodesics $(\gamma_{g_{1},\alpha_{1}}(c_{j}))_{j \geq 0}$ such that $\delta_{\gamma_{g_{1},\alpha_{1}}(c_{n})}$ converges to $\mu_{g_{1}}$ in the weak-$\ast$ topology. Let us denote the energy of a curve $\gamma$ (with respect to a metric $g$) by $E_{g}(\gamma)$. Then, for $j$ big enough, we have
\[ 
\begin{split}
    \frac{E_{g_{2}}(\gamma_{g_{1},\alpha_{1}}(c_{j})) + \frac{1}{2} \ell_{g_{2}}(\gamma_{g_{1},\alpha_{1}}(c_{j}))}{\ell_{g_{1}}(\gamma_{g_{1},\alpha_{1}}(c_{j}))}
    &=\frac{1}{2\ell_{g_{1}}(\gamma_{g_{1},\alpha_{1}}(c_{j}))} \int_{\gamma_{g_{1},\alpha_{1}}(c_{j})}(e^{2f}+e^{f}) \\
    &< \frac{1}{2} \int_{SM}(e^{2\pi_{0}^{*}f}+e^{\pi_{0}^{*}f})d\mu_{g_{1}}+\frac{\eps}{2} \\
    &= \frac{1}{2} \int_{M}(e^{2f}+e^{f})d\vol_{g_{1}}+\frac{\eps}{2} \\
    &<1-\frac{\eps}{2},
\end{split}
\]
where in the first step we used that $g_{1}$ and $g_{2}$ are conformal, in the second one the denseness of periodic magnetic geodesics, while the last one follows from \eqref{eq:ineq-contradiction-conformal}. Now observe that since $\pi_{1}^{*}\alpha_{1}$ and $\pi_{1}^{*}\alpha_{2}$ are odd functions, we have 
\begin{equation} \label{eq:int-alpha}
    \int_{\gamma_{g_{1},\alpha_{1}}(c_{j})} \alpha_{1}, \int_{\gamma_{g_{1},\alpha_{1}}(c_{j})} \alpha_{2} \xrightarrow{j \to \infty} 0. 
\end{equation}
Then, we have for $j$ big enough
\[ 
\begin{split}
    1-\frac{\eps}{2}  > & \frac{1}{\ell_{g_{1}}(\gamma_{g_{1},\alpha_{1}}(c_{j}))}\left( \mathbb{A}_{g_{2},\alpha_{2}}(\gamma_{g_{1},\alpha_{1}}(c_{j}))+\int_{\gamma_{g_{1},\alpha_{1}}(c_{j})}\alpha_{2} \right) \\
     \geq & \frac{1}{\ell_{g_{1}}(\gamma_{g_{1},\alpha_{1}}(c_{j}))} \mathbb{A}_{g_{2},\alpha_{2}}(\gamma_{g_{2},\alpha_{2}}(c_{j}))+\frac{1}{\ell_{g_{1}}(\gamma_{g_{1},\alpha_{1}}(c_{j}))} \int_{\gamma_{g_{1},\alpha_{1}}(c_{j})}\alpha_{2}  \\
     = & \left( \A_{g_{1},\alpha_{1}}(\gamma_{g_{1},\alpha_{1}}(c_{j}))+\int_{\gamma_{g_{1},\alpha_{1}}(c_{j})}\alpha_{1} \right)^{-1} \mathbb{A}_{g_{2},\alpha_{2}}(\gamma_{g_{2},\alpha_{2}}(c_{j})) \\
     & +\frac{1}{\ell_{g_{1}}(\gamma_{g_{1},\alpha_{1}}(c_{j}))} \int_{\gamma_{g_{1},\alpha_{1}}(c_{j})}\alpha_{2}  \\
     = & \left( \A_{g_{1},\alpha_{1}}(\gamma_{g_{1},\alpha_{1}}(c_{j}))+\int_{\gamma_{g_{1},\alpha_{1}}(c_{j})}\alpha_{1} \right)^{-1} \mathbb{A}_{g_{1},\alpha_{1}}(\gamma_{g_{1},\alpha_{1}}(c_{j})) \\
     & +\frac{1}{\ell_{g_{1}}(\gamma_{g_{1},\alpha_{1}}(c_{j}))} \int_{\gamma_{g_{1},\alpha_{1}}(c_{j})}\alpha_{2} \\
     = &  \left( \frac{\A_{g_{1},\alpha_{1}}(\gamma_{g_{1},\alpha_{1}}(c_{j}))}{\ell_{g_{1}}(\gamma_{g_{1},\alpha_{1}}(c_{j}))}+\frac{1}{\ell_{g_{1}}(\gamma_{g_{1},\alpha_{1}}(c_{j}))}\int_{\gamma_{g_{1},\alpha_{1}}(c_{j})}\alpha_{1} \right)^{-1} \frac{\mathbb{A}_{g_{1},\alpha_{1}}(\gamma_{g_{1},\alpha_{1}}(c_{j}))}{\ell_{g_{1}}(\gamma_{g_{1},\alpha_{1}}(c_{j}))} \\
     &+\frac{1}{\ell_{g_{1}}(\gamma_{g_{1},\alpha_{1}}(c_{j}))} \int_{\gamma_{g_{1},\alpha_{1}}(c_{j})}\alpha_{2}, \\
\end{split}
\]
where in the first step we used that magnetic geodesic minimize the action, in the second we used the definition of the action, and the third follows from the equality of the actions. Thus, letting $j \to \infty$, and using \eqref{eq:int-alpha}, we obtain a contradiction. This shows that $f \equiv 0$. To show the relations between the magnetic parts, we observe that 
\[ 
\begin{split}
    \ell_{g_{1}}(\gamma_{g_{1},\alpha_{1}}(c))-\int_{\gamma_{g_{1},\alpha_{1}}(c)}\alpha_{1} & = \mathbb{A}_{g_{1},\alpha_{1}}(\gamma_{g_{1},\alpha_{1}}(c)) \\
    & = \mathbb{A}_{g_{2},\alpha_{2}}(\gamma_{g_{2},\alpha_{2}}(c)) \\
    & \leq \mathbb{A}_{g_{2},\alpha_{2}}(\gamma_{g_{1},\alpha_{1}}(c)) \\
    & =\ell_{g_{1}}(\gamma_{g_{1},\alpha_{1}}(c))-\int_{\gamma_{g_{1},\alpha_{1}}(c)} \alpha_{2},
\end{split}
\]
we in the second step we used the equality of the marked actions, in the third one we used that the action is minized over magnetic geodesics, and the last one follows from the definition of the action and the fact that $g_{1}=g_{2}$. Then,
\begin{equation} \label{eq:int-alpha-non-negative}
    \int_{\gamma_{g_{1},\alpha_{1}}(c)}(\alpha_{1}-\alpha_{2}) \geq 0.
\end{equation}
We can invoke the non-negative Liv\v sic theorem (Lemma \ref{lemma:non-negative_livsic}) to obtain a H\"older function $U$ so that for any $x$ and $s$ we have 
\begin{equation} \label{eq:nnl-magnetic}
    \int_{0}^{s} \pi_{1}^{*}\alpha_{1}- \pi_{1}^{*}\alpha_{2} + U(\varphi_{s}x)-U(x) \geq 0.
\end{equation}
Using again that the integral of 1-forms over $SM$ is zero, and the fact the magnetic flow preserves the Liouville measure, we obtain equality in \eqref{eq:nnl-magnetic}. In particular, taking $s$ as the period of the closed curves, we obtain equality in \eqref{eq:int-alpha-non-negative}. Hence, the results follows from tensor tomography for magnetic flows for 1-tensors \cite{DP08}*{Theorem B}.


\bibliographystyle{alpha}
\bibliography{bib}

\begin{bibdiv}
\begin{biblist}

\bib{AdSMRT-magnetic-rigidity-2d}{article}{
      author={Assenza, Valerio},
      author={de~Simoi, Jacopo},
      author={Marshall~Reber, James},
      author={Terek, Ivo},
       title={Marked length spectrum rigidity for anosov magnetic surfaces},
        date={2024},
     journal={arXiv preprint arXiv:2409.20545},
}

\bib{Ainsworth15}{article}{
      author={Ainsworth, Gareth},
       title={The magnetic ray transform on anosov surfaces},
        date={2015},
        ISSN={1078-0947},
     journal={Discrete Contin. Dyn. Syst.},
      volume={35},
      number={5},
       pages={1801\ndash 1816},
      review={\MR{3294225}},
}

\bib{Assenza24}{article}{
      author={Assenza, Valerio},
       title={Magnetic curvature and existence of a closed magnetic geodesic on
  low energy levels},
        date={2024},
        ISSN={1073-7928},
     journal={Int. Math. Res. Not. IMRN},
      number={21},
       pages={13586\ndash 13610},
      review={\MR{4819867}},
}

\bib{Beaufort-magnetic-tom}{article}{
      author={Beaufort, Louis-Brahim},
       title={Tensor tomography and frame flow ergodicity for magnetic flows in
  higher dimensions},
        date={2026},
     journal={In preparation},
}

\bib{1985-burns-katok}{article}{
      author={Burns, K.},
      author={Katok, A.},
       title={Manifolds with nonpositive curvature},
        date={1985},
        ISSN={0143-3857,1469-4417},
     journal={Ergodic Theory Dynam. Systems},
      volume={5},
      number={2},
       pages={307\ndash 317},
         url={https://doi.org/10.1017/S0143385700002935},
      review={\MR{796758}},
}

\bib{BK02}{article}{
      author={Burns, Keith},
      author={Paternain, Gabriel~P.},
       title={Anosov magnetic flows, critical values and topological entropy},
        date={2002},
        ISSN={0951-7715,1361-6544},
     journal={Nonlinearity},
      volume={15},
      number={2},
       pages={281\ndash 314},
  url={https://doi-org.offcampus.lib.washington.edu/10.1088/0951-7715/15/2/305},
      review={\MR{1888853}},
}

\bib{Butt_approx_mls}{article}{
      author={Butt, Karen},
       title={Approximate rigidity of the marked length spectrum},
        date={2023},
        ISSN={2772-9559},
     journal={Math. Res. Rep.},
      volume={4},
       pages={63\ndash 82},
         url={https://doi.org/10.5802/mrr.18},
      review={\MR{4687792}},
}

\bib{Butt_quantitative_mls}{article}{
      author={Butt, Karen},
       title={Quantitative marked length spectrum rigidity},
        date={2025},
        ISSN={1465-3060,1364-0380},
     journal={Geom. Topol.},
      volume={29},
      number={8},
       pages={3995\ndash 4054},
         url={https://doi.org/10.2140/gt.2025.29.3995},
      review={\MR{4999725}},
}

\bib{CIPP00}{article}{
      author={Contreras, G.},
      author={Iturriaga, R.},
      author={Paternain, G.~P.},
      author={Paternain, M.},
       title={The palais-smale condition and ma\~n\'e's critical values},
        date={2000},
        ISSN={1424-0637},
     journal={Ann. Henri Poincar\'e},
      volume={1},
      number={4},
       pages={655\ndash 684},
      review={\MR{1785184}},
}

\bib{Croke-rigidity-2d}{article}{
      author={Croke, Christopher~B.},
       title={Rigidity for surfaces of nonpositive curvature},
        date={1990},
        ISSN={0010-2571,1420-8946},
     journal={Comment. Math. Helv.},
      volume={65},
      number={1},
       pages={150\ndash 169},
         url={https://doi.org/10.1007/BF02566599},
      review={\MR{1036134}},
}

\bib{CS98}{article}{
      author={Croke, Christopher~B.},
      author={Sharafutdinov, Vladimir~A.},
       title={Spectral rigidity of a compact negatively curved manifold},
        date={1998},
        ISSN={0040-9383},
     journal={Topology},
      volume={37},
      number={6},
       pages={1265\ndash 1273},
         url={https://doi.org/10.1016/S0040-9383(97)00086-4},
      review={\MR{1632920}},
}

\bib{de_la_Llave_Marco_Moriyon_perturbation}{article}{
      author={de~la Llave, R.},
      author={Marco, J.~M.},
      author={Moriy\'on, R.},
       title={Canonical perturbation theory of {A}nosov systems and regularity
  results for the {L}iv\v sic cohomology equation},
        date={1986},
        ISSN={0003-486X,1939-8980},
     journal={Ann. of Math. (2)},
      volume={123},
      number={3},
       pages={537\ndash 611},
         url={https://doi.org/10.2307/1971334},
      review={\MR{840722}},
}

\bib{DP05}{article}{
      author={Dairbekov, Nurlan~S.},
      author={Paternain, Gabriel~P.},
       title={Longitudinal {KAM}-cocycles and action spectra of magnetic
  flows},
        date={2005},
        ISSN={1073-2780},
     journal={Math. Res. Lett.},
      volume={12},
      number={5-6},
       pages={719\ndash 729},
         url={https://doi.org/10.4310/MRL.2005.v12.n5.a9},
      review={\MR{2189233}},
}

\bib{DP07}{article}{
      author={Dairbekov, Nurlan~S.},
      author={Paternain, Gabriel~P.},
       title={Entropy production in gaussian thermostats},
        date={2007},
        ISSN={0010-3616},
     journal={Comm. Math. Phys.},
      volume={269},
      number={2},
       pages={533\ndash 543},
      review={\MR{2274556}},
}

\bib{DP08}{article}{
      author={Dairbekov, Nurlan~S.},
      author={Paternain, Gabriel~P.},
       title={Rigidity properties of {A}nosov optical hypersurfaces},
        date={2008},
        ISSN={0143-3857},
     journal={Ergodic Theory Dynam. Systems},
      volume={28},
      number={3},
       pages={707\ndash 737},
      review={\MR{2422013}},
}

\bib{DPSU07}{article}{
      author={Dairbekov, Nurlan~S.},
      author={Paternain, Gabriel~P.},
      author={Stefanov, Plamen},
      author={Uhlmann, Gunther},
       title={The boundary rigidity problem in the presence of a magnetic
  field},
        date={2007},
        ISSN={0001-8708},
     journal={Adv. Math.},
      volume={216},
      number={2},
       pages={535\ndash 609},
      review={\MR{2351370}},
}

\bib{Ebin_slice}{article}{
      author={Ebin, David~G.},
       title={On the space of {R}iemannian metrics},
        date={1968},
        ISSN={0002-9904},
     journal={Bull. Amer. Math. Soc.},
      volume={74},
       pages={1001\ndash 1003},
         url={https://doi.org/10.1090/S0002-9904-1968-12115-9},
      review={\MR{231410}},
}

\bib{EC25}{article}{
      author={Echevarr\'ia~Cuesta, Javier},
       title={Smooth orbit equivalence rigidity for dissipative geodesic
  flows},
        date={2025},
        ISSN={0143-3857,1469-4417},
     journal={Ergodic Theory Dynam. Systems},
      volume={45},
      number={12},
       pages={3698\ndash 3727},
         url={https://doi.org/10.1017/etds.2025.20},
      review={\MR{4986555}},
}

\bib{2011-faure-sjostrand}{article}{
      author={Faure, Fr\'ed\'eric},
      author={Sj\"ostrand, Johannes},
       title={Upper bound on the density of {R}uelle resonances for {A}nosov
  flows},
        date={2011},
        ISSN={0010-3616,1432-0916},
     journal={Comm. Math. Phys.},
      volume={308},
      number={2},
       pages={325\ndash 364},
         url={https://doi.org/10.1007/s00220-011-1349-z},
      review={\MR{2851145}},
}

\bib{GKL22}{article}{
      author={Guillarmou, Colin},
      author={Knieper, Gerhard},
      author={Lefeuvre, Thibault},
       title={Geodesic stretch, pressure metric and marked length spectrum
  rigidity},
        date={2022},
        ISSN={0143-3857},
     journal={Ergodic Theory Dynam. Systems},
      volume={42},
      number={3},
       pages={974\ndash 1022},
      review={\MR{4374964}},
}

\bib{2019-guillarmou-lefeuvre}{article}{
      author={Guillarmou, Colin},
      author={Lefeuvre, Thibault},
       title={The marked length spectrum of {A}nosov manifolds},
        date={2019},
        ISSN={0003-486X,1939-8980},
     journal={Ann. of Math. (2)},
      volume={190},
      number={1},
       pages={321\ndash 344},
         url={https://doi.org/10.4007/annals.2019.190.1.6},
      review={\MR{3990606}},
}

\bib{GLP25}{article}{
      author={Guillarmou, Colin},
      author={Lefeuvre, Thibault},
      author={Paternain, Gabriel~P.},
       title={Marked length spectrum rigidity for {A}nosov surfaces},
        date={2025},
        ISSN={0012-7094,1547-7398},
     journal={Duke Math. J.},
      volume={174},
      number={1},
       pages={131\ndash 157},
         url={https://doi.org/10.1215/00127094-2024-0022},
      review={\MR{4866545}},
}

\bib{Guillarmou17}{article}{
      author={Guillarmou, Colin},
       title={Invariant distributions and {X}-ray transform for {A}nosov
  flows},
        date={2017},
        ISSN={0022-040X},
     journal={J. Differential Geom.},
      volume={105},
      number={2},
       pages={177\ndash 208},
      review={\MR{3606728}},
}

\bib{Hamenstadt-rigidity-LS}{article}{
      author={Hamenst\"adt, U.},
       title={Cocycles, symplectic structures and intersection},
        date={1999},
        ISSN={1016-443X,1420-8970},
     journal={Geom. Funct. Anal.},
      volume={9},
      number={1},
       pages={90\ndash 140},
         url={https://doi.org/10.1007/s000390050082},
      review={\MR{1675892}},
}

\bib{Katok_conformal}{article}{
      author={Katok, Anatole},
       title={Four applications of conformal equivalence to geometry and
  dynamics},
        date={1988},
        ISSN={0143-3857,1469-4417},
     journal={Ergodic Theory Dynam. Systems},
      volume={8$^*$},
       pages={139\ndash 152},
         url={https://doi.org/10.1017/S0143385700009391},
      review={\MR{967635}},
}

\bib{Lefeuvre_book}{book}{
      author={Lefeuvre, Thibault},
       title={Microlocal analysis in hyperbolic dynamics and geometry},
      series={Cours Sp\'ecialis\'es [Specialized Courses]},
   publisher={Soci\'et\'e{} Math\'ematique de France, Paris},
        date={2025},
      volume={32},
        ISBN={978-2-37905-221-7},
      review={\MR{5022124}},
}

\bib{Lopes_Thieullen_Livsic}{article}{
      author={Lopes, A.~O.},
      author={Thieullen, Ph.},
       title={Sub-actions for {A}nosov flows},
        date={2005},
        ISSN={0143-3857,1469-4417},
     journal={Ergodic Theory Dynam. Systems},
      volume={25},
      number={2},
       pages={605\ndash 628},
         url={https://doi.org/10.1017/S0143385704000732},
      review={\MR{2129112}},
}

\bib{M97}{article}{
      author={Ma\~n\'e, Ricardo},
       title={Lagrangian flows: the dynamics of globally minimizing orbits},
        date={1997},
        ISSN={0100-3569},
     journal={Bol. Soc. Brasil. Mat. (N.S.)},
      volume={28},
      number={2},
       pages={141\ndash 153},
         url={https://doi-org.offcampus.lib.washington.edu/10.1007/BF01233389},
      review={\MR{1479499}},
}

\bib{2025-munoz-thon_Richardson}{article}{
      author={Mu\~noz Thon, Sebasti\'an},
      author={Richardon, Sean},
       title={Guillarmou's normal operator for magnetic and thermostat flows},
        date={2025},
     journal={arXiv preprint arXiv:2512.23106},
}

\bib{MR-deformative}{article}{
      author={Marshall~Reber, James},
       title={Deformative magnetic marked length spectrum rigidity},
        date={2023},
        ISSN={0024-6093,1469-2120},
     journal={Bull. Lond. Math. Soc.},
      volume={55},
      number={6},
       pages={3077\ndash 3096},
         url={https://doi.org/10.1112/blms.12911},
      review={\MR{4698329}},
}

\bib{Otal-rigidity-2d}{article}{
      author={Otal, Jean-Pierre},
       title={Le spectre marqu\'e{} des longueurs des surfaces \`a{} courbure
  n\'egative},
        date={1990},
        ISSN={0003-486X,1939-8980},
     journal={Ann. of Math. (2)},
      volume={131},
      number={1},
       pages={151\ndash 162},
         url={https://doi.org/10.2307/1971511},
      review={\MR{1038361}},
}

\end{biblist}
\end{bibdiv}

\end{document}